\newtheorem{theorem}{Theorem}
\newtheorem{corollary}[theorem]{Corollary}
\newtheorem{lemma}[theorem]{Lemma}
\newtheorem{proposition}[theorem]{Proposition}
\newtheorem{remark}[theorem]{Remark}
\date{\today}
\begin{document}

\title{Invariant measures for   stochastic damped 2D Euler equations}

\author{Hakima Bessaih
\footnote{University of Wyoming, Department of Mathematics, Dept. 3036, 1000
East University Avenue, Laramie WY 82071, United States, bessaih@uwyo.edu}
\and Benedetta Ferrario
\footnote{(Corresponding author) Universit\`a di Pavia, Dipartimento di Matematica, via
  Ferrata 5, 27100 Pavia, Italy,
benedetta.ferrario@unipv.it, phone (+39)0382 985655, fax (+39)0382 985602}
}

\maketitle

\begin{abstract}
We study the two-dimensional Euler equations, damped by a linear term and 
 driven by an additive noise. 
The existence of  weak solutions has already been studied;
pathwise uniqueness 
is known for solutions that have vorticity in $L^\infty$.
In this paper, we prove the Markov property and then  the existence of an
invariant measure in the space $L^\infty$ by means of 
 a 
Krylov-Bogoliubov's type method, working with  the weak$\star$  
and the bounded  weak$\star$   topologies in $L^\infty$.
 \end{abstract}
\noindent
{\bf MSC2010}: 60H15, 37L40, 47D07, 76B03, 60J99.\\
{\bf Keywords}: Stochastic Euler equations, vorticity formulation,
Markov processes, invariant measures,  dissipative dynamical systems.

\bigskip

\section{Introduction}
Two dimensional hydrodynamics  is  largely studied
from the theoretical as well as from the applied point of view.
Both the analysis of individual solutions or statistical solutions have been developped.
In particular  turbulence theory, which analyzes the equations of motion of a fluid
by introducing statistical means, asks for
existence/uniqueness of statistically stationary solutions.
They describe the motion of fluids at equilibrium, for large time.

For the bidimensional  equations of viscous fluids,
that is  the Navier-Stokes equations, forced by a random forcing term
existence and uniqueness of invariant measures have been proved
under many different assumptions on the noise term
(see, among the others, \cite{HM} and references therein).

Moreover,
these equations with a weaker dissipation 
have been considered more recently by Constantin,  Glatt-Holtz and Vicol \cite{CGHV}
proving existence and uniqueness of invariant measures;
they are called the fractionally dissipated Euler equations.

On the other hand, for the   stochastically forced
2D Euler equations with a linear damping, which is a wave-number independent dissipation,
the only so far known results on  the longtime behavior are 
through their weak random attractors and stationary solutions
(see \cite{Bessaih2000, BF,Bessaih2008,BesFer13}).
These equations are given by 
\begin{equation}\label{Eu}
\begin{cases}
du+(u\cdot\nabla)u \ dt +\gamma u \ dt +\nabla p\ dt = dW\\
\nabla \cdot u =0
\end{cases}
\end{equation}
The unknowns are the velocity vector $u=u(t,x)$ and the pressure
$p=p(t,x)$;
here $t$ is the time variable and $x\in D\subset \mathbb R^2$ the space variable.
$W=W(t,x)$ is a given Wiener process. We assume $\gamma> 0$.
With respect to the vorticity $ \xi=\nabla^\perp \cdot u\equiv
  \partial_1 u_2- \partial_2 u_1$ they are
\begin{equation}\label{vor}
\begin{cases}
 d\xi+ u\cdot\nabla\xi \ dt +\gamma\xi\ dt = dW^{curl}\\
 \xi=\nabla^\perp \cdot u,  \quad \nabla \cdot u =0
 \end{cases}
 \end{equation}
with $W^{curl}=\nabla^\perp\cdot W$.

When
$\gamma=0$ the above are the 
Euler equations governing the motion of an incompressible inviscid fluid 
that have been extensively studied.
When $\gamma~>~0$, the linear damping, although not regularizing, introduces 
some dissipative feature, discussed in
\cite{boffetta, gal}.   Kupianinen in  \cite{Ku} points out how these
randomly forced damped Euler equations are related to 2D turbulence
theory and to the viscous case (see also \cite{BesFer13}); moreover,
interesting scaling limits on the vanishing  viscosity and/or the damping are discussed,
giving some open conjectures on the limits problems.

We recall that in \cite{Bessaih2008} 
the existence of stationary solutions 
to these stochastic damped Euler equations has been proved
in the space $L^2(D)$ for the vorticity.  In particular  this is a space
where the uniqueness does not hold. Let us recall  that there is no need
to define the associated  transition semigroup in order to define stationary solutions.
Hence, having stationary solutions is a weaker result than having
an invariant measure where a proper dynamics is needed.
Here we improve that result by defining a transition semigroup in the space  $L^\infty(D)$,
which is the space where uniqueness is proved for equation \eqref{vor}.
The drawback of working in the space  $L^\infty(D)$ is that it
is not separable, and weak$\star$ measurability and strong measurability do not coincide.
In this paper we prove that the transition semigroup
is sequentially weakly$\star$ Feller and Markov in  $L^\infty(D)$ equipped with the
bounded  weak$\star$ topology.
Then, we construct an invariant measure  by means of  Krylov-Bogoliubov's technique but
dealing with weak$\star$ topologies,
in a similar way as done by Maslowski and Seidler in \cite{MS}
(however they worked in a separable Hilbert space). 

As far as we know, this is the first result for the damped Euler equation \eqref{vor}
and the first result for any fluid dynamic equation in a non separable
space setting, like  $L^\infty(D)$.
We hope that our method could be used to tackle other models with similar problems. 

The paper  is organized as follows. In Section 2, we introduce 
the functional spaces and assumptions.
The space $L^\infty(D)$ with its various topologies is  described in some detail
in subsection 2.2.
A particular attention will be devoted to the bounded weak$\star$ topology;
this is a crucial point that will be used in the Krylov-Boguliobov's technique
for the passage to the limit in order to get the invariant measure.
We also recall some well posedness results, that are not new but contain
some improvements for the measurability of the solutions in $L^\infty(D)$.
In Section 3, we prove the 
continuous dependence of the vorticity solution with respect to the initial 
data and a spatial regularity result in the Sobolev space 
$[W^{1,4}(D)]^{2}$.  This leads to
 the   ''weak'' Feller property for  the transition Markov 
semigroup that is defined afterwards. 
In  Section 4, we prove the Markov property in the space $L^\infty(D)$
for system \eqref{vor}. In particular, 
we first prove the Markov property in $W^{1,4}(D)$ and then conclude by 
a density argument. Finally in Section 5 we prove existence of an invariant measure; 
this is the only part in which the assumption $\gamma>0$ is required,
otherwise all the previous results hold for any $\gamma\ge 0$.

\section{Preliminaries and assumptions}

\subsection{Mathematical setting}

Let $D$ be the torus $\mathbb R^2\setminus \mathbb Z^2$.
This means that the spatial domain is a square  and periodic boundary conditions are assumed.
The results remain true in a bounded domain,  see \cite{Bardos, BF, Bes15}.

We define the space $H$ of periodic vector fields
which are square integrable, divergence free and have zero mean value on $D$.
This is a separable Hilbert space, with the $[L^2(D)]^2$-scalar
 product. 
We denote by $|\cdot|$ the $H$-norm and by $(\cdot,\cdot)$ the
$H$-scalar product;
 $|\cdot|_p$ is  the $[L^p(D)]^2$-norm.
\\
We define $V=[H^1(D)]^2\cap H$ and  
denote by $\|\cdot\|$ its norm.
\\
For $k \ge 1$ and  $p > 2$ we define
$V^{k,p}=[W^{k,p}(D)]^2\cap V$, being $W^{k,p}(D)$ the Sobolev space. We denote
by $\|\cdot\|_{k,p}$ the $V^{k,p}$-norm.
$V^{k_1,p}$ is a dense subspace of $V^{k_2,p}$ for $k_1>k_2$ and the
embedding is compact. For simplicity we write $V^k$ for $V^{k,2}$.

Let $V^\prime$ be the dual space of $V$ with respect to the $H$ scalar product.
Identifying $H$ with
its dual space $H'$, and $H'$ with the corresponding natural
subspace of the dual space $V'$, we have the  Gelfand triple
$V\subset H\subset V'$ with continuous dense injections. 
We denote the dual pairing between $u\in V$ and $v\in V^\prime$ by $\langle  u,v\rangle$.
When $v\in H$, we have $(u,v)=\langle u,v\rangle$. For other duality
pairings the spaces will be specified when necessary.

Let $b(\cdot,\cdot,\cdot): V\times V\times V\longrightarrow
\mathbb{R}$ be the continuous trilinear form defined as
$$
 b(u,v,z)=\int_{D}([u(x)\cdot\nabla] v(x))\cdot z(x)\, dx .
$$
It is well known that there exists a continuous bilinear operator
$B(\cdot,\cdot): V\times V\longrightarrow V'$ such that
$\langle  B(u,v),z\rangle =b(u,v,z),\ {\rm for}\ {\rm all}\ z\in V.$
By the incompressibility condition, for $u,v, z\in V$
we have (see, e.g., \cite{Temam})
\begin{equation} \label{incompress}
\langle B(u,v),z\rangle=- \langle B(u,z),v\rangle \quad   \mbox{\rm and}\quad    \langle B(u,v),v\rangle =0.
\end{equation}

Working on the torus we can develop the velocity and the vorticity in Fourier series, so to 
easily express the relationship between $u$ and $\xi$ (see, e.g.,
details in \cite{BesFer13}),
proving that for any $p\in [2,\infty)$
the norms $|\nabla u|_p$ and $|\xi|_p$ are equivalent  and
that the norm $| u|_{p}$ is bounded by the norm $|\xi|_p$.

As far as the stochastic part is concerned, we are given a complete probability space
$(\Omega, \mathcal F, \mathbb P)$ and  a sequence   
$\{\tilde \beta_j(t); t\ge 0\}_{j\in\mathbb N}$ of independent 
standard 1-dimensional Wiener processes defined on it. Then we consider a new sequence of i.i.d. Wiener processes defined for any time $t \in \mathbb R$:
\[
\beta_i(t)=\begin{cases}
        \tilde\beta_{2i-1}(t) & \text{ for } t\ge 0\\
        \tilde\beta_{2i}(-t)  & \text{ for } t\le 0\\
\end{cases}
\]
The noise forcing term in equation \eqref{Eu} is taken of the form 
\begin{equation}\label{serie-noise}
 W(t,x)=\sum_{i \in \mathbb N} c_i \beta_i(t) e_i(x) 
\end{equation}
for some  $c_i \in \mathbb R$ (see, e.g., \cite{DPZ}), where $\{e_i\}_i$ is a complete
orthonormal system of $H$.
We define the filtration $\{\mathcal F_t\}_{t\in \mathbb R}$ by 
$\mathcal F_t=\sigma\{ W(t_2)-W(t_1), -\infty<t_1<t_2\le t  \}$.

In the sequel we shall require $W$ to take values in the space
$C(\mathbb R;V^{k,\infty})$  for $k=2$ or $k=3$;
by Sobolev embedding we know that it is sufficient that for some
  $h>k+1$ the paths 
$W\in C(\mathbb R;V^{h})$ a.s.; a sufficient condition  for this is that
\begin{equation}\label{noise}
 \sum_i c_i^2 \|e_i\|^2_{V^{h}}<\infty.
\end{equation}

\subsection{The space $L^\infty(D)$}

To shorten notation we write $L^p$ for the space $L^p(D)$.  
The space $L^\infty$ is the dual of the space $L^1$; moreover the space $L^\infty$ is not 
separable whereas the space $L^1$ is separable. This is a crucial property which makes the analysis of the dynamics
\eqref{Eu} a delicate matter with respect to some issues. Indeed, main
results available in the literature about stochastic PDE's 
are based on the assumption that the state space is separable
(see e.g. \cite{DPZ,DPZ2}). 

We recall the meaning of convergence in $L^\infty$ with respect to 
the  weak$\star$ topology: 
$\xi_n \overset{\star}{\rightharpoonup}  \xi $ in $L^\infty$
means
\[
_{L^\infty}\langle \xi_n, \phi \rangle _{L^1} \to \
 _{L^\infty}\langle \xi, \phi \rangle_{L^1} \qquad 
 \forall \phi \in L^1 .
\]

Here we collect basic results on topologies and related Borelian
subsets of $L^\infty$ (see, e.g., \cite{Meg}).

We denote by $\mathcal T_n$, $\mathcal T_{bw\star}$, $\mathcal T_{w\star}$
the strong (or norm) topology, the bounded weak$\star$ topology and
the weak$\star$ topology of $L^\infty$, respectively.
We have that
\begin{equation}\label{top-incl}
\mathcal T_{w\star} \subsetneq \mathcal T_{bw\star}\subsetneq
\mathcal T_n .
\end{equation}
We recall that the bounded weak$\star$ topology is the finest topology
on $L^\infty$
that coincides with the weak$\star$ topology  on every norm bounded
subset of $L^\infty$. 

Let us note that
$f:L^\infty \to \mathbb R$  is $\mathcal T_{bw\star}$-continuous 
if and only if it is sequentially 
$\mathcal T_{w\star}$-continuous\footnote{The space 
$SC(L^\infty,\mathcal T_{w\star})$ of sequentially
  weakly$\star$ continuous functions is the space of all  functions
$f:L^\infty \to \mathbb R$ such that $f(\xi_n)\to f(\xi)$ if $\xi_n
  \rightharpoonup \xi$  weakly$\star$ in $L^\infty$, i.e. 
$\langle \xi_n,g\rangle \to \langle \xi,g\rangle $ for any $g \in L^1$.}. 
Indeed, set $K_n = \{\xi \in L^\infty : \|\xi\|_{L^\infty} \le n\}, n \in \mathbb N$, and note that 
$K_n$ are metrizable $ \mathcal T_{bw\star}$-compact spaces. 
If $f$ is $\mathcal T_{bw\star}$-continuous and $\xi_j \to \xi$ weakly$\star$, 
then for some $n$ we have $\xi_j,\xi \in K_n$; 
the weak$\star$ continuity of $f|_{K_n}$ implies $f(\xi_j) \to f(\xi)$. 
In the opposite direction, let $f$ be sequentially weakly$\star$
continuous. Then
$f|_{K_n}$ is weakly$\star$-continuous on any $K_n$ by metrizability
of the weak$\star$  topology on bounded subsets. If $U \subset \mathbb R$ 
is an arbitrary open set, then $f^{-1}(U)\cap K_n = (f|_{K_n})^{-1}(U)$ 
is $\mathcal T_{w\star}$-open in $K_n$, so $f^{-1}(U)$ is $\mathcal T_{bw\star}$-open 
and $\mathcal T_{bw\star}$-continuity of $f$ follows.

Denoting by $C(L^\infty,\mathcal T)$ the space of all functions
$f:L^\infty\to \mathbb R$ which are $\mathcal T$-continuous, we thus  have that
\[ 
C(L^\infty,\mathcal T_{w\star})\subsetneq 
C(L^\infty,\mathcal T_{bw\star})=SC(L^\infty,\mathcal T_{w\star})\subsetneq
C(L^\infty,\mathcal T_n) .
\]

We recall that by Alaoglu-Banach theorem, the set
$\{\xi\in L^\infty: \|\xi\|_{L^\infty}\le R\}$ is $\mathcal T_{w\star}$-compact. 
Hence it is also $\mathcal T_{bw\star}$-compact,
since the $\mathcal T_{w\star}$-compact subsets coincide with the 
 $\mathcal T_{bw\star}$-compact subsets.

As far as measurability with respect to these topologies is concerned, 
let us denote by $\mathcal B(\mathcal T)$ the $\sigma$-algebra of Borelian
subsets of $L^\infty$ w.r.t.  the a given topology $\mathcal T$. According to 
\eqref{top-incl}
 we have that
 $ \mathcal B(\mathcal T_{w\star}) \subseteq \mathcal B(\mathcal T_{bw\star})
 \subseteq  \mathcal B(\mathcal T_n)$.
 Moreover
\begin{lemma}
For the space $L^\infty$ we have
\[
\mathcal B(\mathcal T_{w\star})=\mathcal B(\mathcal T_{bw\star}).
\]
\end{lemma}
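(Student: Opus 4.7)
One inclusion, namely $\mathcal B(\mathcal T_{w\star}) \subseteq \mathcal B(\mathcal T_{bw\star})$, is already recorded in the excerpt since $\mathcal T_{w\star} \subseteq \mathcal T_{bw\star}$. So the real content is the reverse inclusion: every $\mathcal T_{bw\star}$-open set must be shown to be a $\mathcal T_{w\star}$-Borel set.

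The plan is to exploit the $\sigma$-compact structure of $L^\infty$ given by the closed balls $K_n=\{\xi\in L^\infty:\|\xi\|_{L^\infty}\le n\}$, $n\in\mathbb N$. Each $K_n$ is $\mathcal T_{w\star}$-compact by Alaoglu–Banach (as recalled in the excerpt), so in particular $K_n\in\mathcal B(\mathcal T_{w\star})$, and $L^\infty=\bigcup_{n\in\mathbb N} K_n$. The key property, also stated in the excerpt, is that $\mathcal T_{bw\star}$ is the finest topology on $L^\infty$ whose restriction to every norm-bounded set coincides with $\mathcal T_{w\star}$; in particular
\[
\mathcal T_{bw\star}\big|_{K_n}=\mathcal T_{w\star}\big|_{K_n} \qquad \forall n\in\mathbb N.
\]

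Given this, the argument is short. Take any $U\in\mathcal T_{bw\star}$. Then $U\cap K_n$ is open in the subspace topology $\mathcal T_{bw\star}|_{K_n}$, and by the equality above it is also open in $\mathcal T_{w\star}|_{K_n}$. By the definition of subspace topology, there exists $V_n\in\mathcal T_{w\star}$ such that $U\cap K_n=V_n\cap K_n$, and therefore $U\cap K_n\in\mathcal B(\mathcal T_{w\star})$, since both $V_n$ and $K_n$ are. Writing
\[
U=\bigcup_{n\in\mathbb N}(U\cap K_n)
\]
exhibits $U$ as a countable union of $\mathcal T_{w\star}$-Borel sets, so $U\in\mathcal B(\mathcal T_{w\star})$. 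Since $\mathcal B(\mathcal T_{bw\star})$ is generated by $\mathcal T_{bw\star}$, this gives $\mathcal B(\mathcal T_{bw\star})\subseteq\mathcal B(\mathcal T_{w\star})$.

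I do not expect any serious obstacle; the only point one must be careful with is to invoke the coincidence of the two topologies on balls (rather than, say, metrizability on balls) so that one directly obtains a representation $U\cap K_n=V_n\cap K_n$ with $V_n$ open in the ambient weak$\star$ topology. The $\sigma$-compactness of $L^\infty$ by balls then converts this local statement into a global Borel-measurability conclusion.
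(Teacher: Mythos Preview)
Your proof is correct and takes a genuinely different route from the paper's. The paper appeals to the explicit basis \eqref{basis-bw} for $\mathcal T_{bw\star}$ (from Megginson) and shows that each basic set $B(\eta;\{g_i\})$ lies in $\mathcal B(\mathcal T_{w\star})$ because the map $\xi\mapsto\sup_i|\langle\xi-\eta,g_i\rangle|$ is a pointwise limit of weak$\star$-continuous functions; from this it concludes the inclusion of $\sigma$-algebras. Your argument instead exploits the $\sigma$-compact exhaustion $L^\infty=\bigcup_n K_n$ by norm-balls and the defining property that $\mathcal T_{bw\star}|_{K_n}=\mathcal T_{w\star}|_{K_n}$, reducing the question to the trivial observation that $U\cap K_n=V_n\cap K_n$ with $V_n\in\mathcal T_{w\star}$ and $K_n$ weak$\star$-closed. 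This buys you two things: you avoid any reference to the concrete form of a basis, and you handle an \emph{arbitrary} $\mathcal T_{bw\star}$-open set directly, rather than first treating basic sets and then having to pass to general opens (a step which in the paper's write-up is left implicit and in fact requires an additional argument, since an uncountable union of Borel sets need not be Borel). Your proof also carries over verbatim to the dual of any Banach space, not just $L^\infty$.
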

\begin{proof}
From \eqref{top-incl} it follows that
$\mathcal B(\mathcal T_{w\star})\subseteq \mathcal B(\mathcal T_{bw\star})$.
Let us show the reverse inclusion.

Recall that a  basis for the weak$\star$ topology $\mathcal T_{w\star}$ of $L^\infty$ is given by the
collection of all subsets
\begin{equation}\label{basis-w}
B(\eta;g_1,\ldots,g_m)=\{\xi\in L^\infty:|\langle \xi-\eta,g_i\rangle|<1 \text{
  for } i=1,\ldots,m\}
\end{equation}
for any $\eta \in L^\infty$, for any $m \in \mathbb N$ and $g_i \in L^1$
(see page 224 in \cite{Meg}), and
a basis for the bounded weak$\star$ topology $\mathcal T_{bw\star}$ of $L^\infty$ is given by the
collection of all subsets
\begin{equation}\label{basis-bw}
B(\eta;\{g_i\}_{i\in \mathbb N})=\{\xi\in L^\infty:|\langle \xi-\eta,g_i\rangle|<1 \text{
  for each } i\}
\end{equation}
for any $\eta \in L^\infty$, for any sequence $\{g_i\}_{i\in \mathbb N}$
in $L^1$ that converges to 0
(see page 235 in \cite{Meg}).

The mapping $\theta_m: L^\infty\ni \xi\mapsto \sup_{i=1,\ldots,m}|\langle \xi-\eta,g_i\rangle |\in \mathbb R$ is 
$\mathcal T_{w\star}$-continuous, hence $\mathcal B(\mathcal T_{w\star})$-measurable\footnote{
  We point out that on the space $\mathbb R$ we always consider the 
  Borel $\sigma$-algebra $\mathcal B^1$. 
  This is not stated at each instance  but tacitely assumed.}.
Therefore,  letting $m\to \infty$ we get that the limit mapping 
$\theta: L^\infty\ni \xi\mapsto \sup_{i\in \mathbb N}|\langle \xi-\eta,g_i\rangle |\in \mathbb R$ is 
 $\mathcal B(\mathcal T_{w\star})$-measurable. This shows that any element \eqref{basis-bw} 
 of the basis   of open subsets with respect to the topology $\mathcal T_{bw\star}$
 belongs to $\mathcal B(\mathcal T_{w\star})$. This implies that 
 $\mathcal B(\mathcal T_{bw\star})\subseteq \mathcal B(\mathcal T_{w\star})$.
\end{proof}

Since in $L^\infty$  the Borelian subsets w.r.t. the weak$\star$
 and the norm topology do not coincide (see \cite{Tal}), we conclude that
\[
\mathcal B(\mathcal T_{w\star})=\mathcal B(\mathcal T_{bw\star})
\subsetneq \mathcal B(\mathcal T_n).
\]
Let us remind that in a separable  Banach space $X$ 
the Borelian subsets w.r.t. the weak
and the norm topology coincide; hence we speak of measurability
meaning that one w.r.t. the (weak=strong) Borelian subsets of $X$.

Finally we deal with the measurability property. Given the mapping
$\omega\in (\Omega,\mathcal F)\to \xi(\omega)\in L^\infty$
we say that it is weakly$\star$ measurable if  for any $g\in L^1$ the mapping
\[
\omega\in \Omega \to \langle \xi(\omega),g\rangle \in \mathbb R
\]
is $\mathcal F\backslash\mathcal B^1$-measurable. 
This is equivalent to say that the mapping $\omega\mapsto \xi(\omega)$
is 
 $\mathcal F\backslash \mathcal B(\mathcal T_{w\star})$-measurable.

\subsection{Existence and uniqueness results}
In this section we collect the basic known results on 
existence and uniqueness for the Euler equation. 
For  $\gamma=0$, these results are stated in a Hilbert setting  in \cite{BessaihF1999,Bes15} and in a more general Banach setting in 
 \cite{BP01}.  The extension to the case $\gamma>0$ is trivial.
We work on any finite time interval $[t_0,T]$; then the results hold on $\mathbb R$.

\begin{theorem}\label{esiste-unico}
Let $\gamma\ge 0$ and assume \eqref{noise} with $h>3$.
\\
i) If $u_0\in V $, then 
on each interval $[t_0,T]$ there exists at least a 
weak global solution for \eqref{Eu} with the initial
condition $u(t_0) = u_0$ satisfying $\mathbf P$-a.s.
$$
 u\in  C([t_0, T]; H) \cap  L^2(t_0, T; V )\qquad 
$$
and, for every $\varphi\in V$ and every $t\in [t_0,T]$ 
\begin{equation*}
 \langle u(t),\varphi\rangle 
 -\int_{t_0}^{t}\langle [u(s)\cdot \nabla] \varphi ,u(s)\rangle ds 
 +\gamma \int_{t_0}^{t}\langle u(s), \varphi\rangle ds 
 =\langle u_{0},\varphi \rangle +\langle W(t)-W(t_0),\varphi \rangle
\end{equation*}
$\mathbf P$-a.s.

Moreover, $u$ is measurable in these topologies and satisfies 
$u \in L^\infty(t_0 ,T; V )$ $\mathbf P$-a.s.
\\
 ii) Let $p \in ]2,\infty[$. 
If $u_0\in V^{1,p}$, 
then the weak global solution $u$ obtained in i)  satisfies 
$$
 u\in  L^\infty(t_0, T; V^{1,p} )\qquad \mathbf P-a.s. 
$$
iii) If $ u_0 \in V$ and $\xi_0=\nabla^\perp\cdot u_0 \in L^\infty$,
then $\xi = \nabla^\perp\cdot u$  (with $u$ 
 the weak global solution obtained in i))  satisfies 
$$
 \xi\in L^\infty([t_0,T]\times D )\qquad \mathbf P-a.s. $$
and  pathwise uniqueness holds.
Moreover $\mathbf P$-a.s.
\[
u \in C_w([0, T]; V), \quad \xi \in C([t_0,T];(L^\infty,\mathcal T_{w\star})),
\]
and  the mapping
\[
(\omega,t)\mapsto \xi(t,\omega)
\]
is jointly measurable, that is 
$\mathcal F\otimes \mathcal B([t_0,T])\backslash \mathcal B(\mathcal T_{w\star})$ measurable. 
\end{theorem}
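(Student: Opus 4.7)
The plan is to reduce \eqref{Eu} to a random (pathwise) PDE by subtracting the Ornstein--Uhlenbeck process $z(t)=\int_{t_0}^t e^{-\gamma(t-s)}\,dW(s)$, then apply the deterministic Euler theory of \cite{BessaihF1999,Bes15,BP01}, and finally upgrade to the new measurability and continuity statements in the weak$\star$ topology of $L^\infty$. Under \eqref{noise} with $h>3$, Sobolev embedding gives $z\in C(\mathbb R;V^{2,\infty})$ a.s., so $v:=u-z$ solves, $\mathbf P$-a.s.,
\begin{equation*}
\partial_t v+(v+z)\cdot\nabla(v+z)+\gamma v+\nabla p=0,\qquad \nabla\cdot v=0,
\end{equation*}
with initial datum $v(t_0)=u_0-z(t_0)$. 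All estimates are then pathwise.

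\emph{Parts (i) and (ii).} For (i) I would run a Galerkin scheme for the random PDE, exploit the cancellation $\langle B(w,w),w\rangle=0$ to obtain $\mathbf P$-a.s.\ estimates in $L^\infty(t_0,T;H)\cap L^2(t_0,T;V)$, absorbing the $z$--depending forcing by Young's inequality and the $\gamma v$ term. Aubin--Lions gives a subsequence converging strongly in $C([t_0,T];H)$ and weakly in $L^2(t_0,T;V)$; that suffices to pass to the limit in the trilinear form. For (ii) I would take the curl and study the vorticity
\begin{equation*}
\partial_t\eta+u\cdot\nabla\eta+\gamma\eta=-u\cdot\nabla z^{\mathrm{curl}}-\gamma z^{\mathrm{curl}},\qquad u=K\ast(\eta+z^{\mathrm{curl}}),
\end{equation*}
with $\eta=\xi-z^{\mathrm{curl}}$. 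Testing against $|\eta|^{p-2}\eta$ kills the transport term by incompressibility, leaves a Gronwall-type inequality in $\|\eta\|_{L^p}$, and the equivalence $|\nabla u|_p\sim|\xi|_p$ recalled in the preliminaries delivers $u\in L^\infty(t_0,T;V^{1,p})$.

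\emph{Part (iii): $L^\infty$ bound, uniqueness, continuity.} If $\xi_0\in L^\infty$, passing $p\to\infty$ in the $L^p$ estimate (or arguing by characteristics along the log-Lipschitz velocity $u$) gives $\eta\in L^\infty([t_0,T]\times D)$, hence $\xi\in L^\infty([t_0,T]\times D)$. Pathwise uniqueness is Yudovich's argument applied to the random PDE: for two solutions with the same $u_0$ and same driving noise the difference $v_1-v_2$ solves a deterministic transport-type equation, and using $\|\xi_i\|_{L^\infty}<\infty$ one obtains the Osgood inequality $\frac{d}{dt}|v_1-v_2|^2\le C|v_1-v_2|^2\bigl(1+\log(C/|v_1-v_2|^2)\bigr)$, forcing $v_1\equiv v_2$. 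The weak continuity $u\in C_w([t_0,T];V)$ is standard (Strauss' lemma: $u$ is bounded in $V$ and continuous into $V'$). For $\xi\in C([t_0,T];(L^\infty,\mathcal T_{w\star}))$, I would fix $g\in L^1$, approximate it by $g_\varepsilon\in L^2$ within $\varepsilon$ in $L^1$, use that $t\mapsto(\xi(t),g_\varepsilon)$ is continuous (from the vorticity equation, $\xi\in C([t_0,T];V^{-1,4/3})$ say), and bound the remainder by $\varepsilon\|\xi\|_{L^\infty(L^\infty)}$; uniform-in-$\varepsilon$ convergence yields continuity of $t\mapsto\langle\xi(t),g\rangle$.

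\emph{Main obstacle and measurability.} The delicate point is joint measurability into the non-separable space $L^\infty$. The preceding lemma is exactly what makes this tractable: it suffices to show that for every $g\in L^1$ the real-valued map $(t,\omega)\mapsto\langle\xi(t,\omega),g\rangle$ is $\mathcal F\otimes\mathcal B([t_0,T])\backslash\mathcal B^1$-measurable. For each $\omega$ outside a null set this map is continuous in $t$ by Step above, and for each fixed $t$ it is $\mathcal F$-measurable because $\xi(t)$ is obtained as a limit (along the Galerkin approximation of the random PDE) of $\mathcal F_t$-measurable $L^2$-valued random variables, and weak$\star$ limits preserve measurability of $\omega\mapsto\langle\xi_n(t,\omega),g\rangle$. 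Carathéodory then gives joint measurability, and by the lemma this amounts to $\mathcal F\otimes\mathcal B([t_0,T])\backslash\mathcal B(\mathcal T_{w\star})$-measurability of $\xi$. This, rather than the pathwise analysis (which largely follows \cite{BessaihF1999,Bes15,BP01}), is the genuinely new content of the statement.
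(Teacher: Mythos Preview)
Your proposal is correct and follows the standard route that the paper itself invokes (reduction to a pathwise random PDE via subtraction of a stochastic convolution, then the deterministic Euler theory of \cite{BessaihF1999,Bes15,BP01,yudovich}); the paper does not give an independent proof of parts (i)--(iii) but cites these references and notes that the extension to $\gamma>0$ is trivial.

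The one point where your argument genuinely differs from the paper is the measurability of $\xi$. You obtain $\mathcal F_t$-measurability of $\omega\mapsto\langle\xi(t,\omega),g\rangle$ by passing to the limit in the Galerkin scheme, relying on pathwise uniqueness to ensure the \emph{full} sequence converges (so that no $\omega$-dependent subsequence selection is needed). The paper instead factors the map as
\[
\omega\ \longmapsto\ W^{curl}(\cdot)(\omega)\in C((-\infty,t];H^{h-1})\ \longmapsto\ \langle\xi(t),g\rangle\in\mathbb R,
\]
proving that the first arrow is measurable and the second is \emph{continuous} (a noise-to-solution continuity statement, morally the same as Theorem~\ref{dip-cont} but with respect to the forcing rather than the initial datum). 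Your route is more elementary and avoids proving an extra continuity result, but it leans on uniqueness and hence only works directly in the setting of part (iii); the paper's route gives adaptedness as a byproduct of a stability estimate that is useful elsewhere (e.g.\ in the proof of Theorem~\ref{dip-cont}). Either way, joint measurability then follows exactly as you say, from $t$-continuity of $\langle\xi(t),g\rangle$ combined with the identification $\mathcal B(\mathcal T_{w\star})=\mathcal B(\mathcal T_{bw\star})$.
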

The important results are about existence; indeed, when the noise is additive
pathwise uniqueness is easily obtained as in the deterministic setting
(see \cite{yudovich,yudovich1995}).

\begin{remark}
a) Here $C_w([0,T];V)$ denotes the space of vectors $u$ 
which are weakly continuous from $[0,T]$ into $V$, i.e. 
for any $\phi \in V^\prime$ the
real mapping
$t \mapsto \langle u(t),\phi\rangle$ is continuous.
\\
b) We say that the mapping
\[
t \in [t_0,T]\mapsto \xi(t) \in  L^\infty
\]
is weakly$\star$  continuous if it is continuous 
when on $ L^\infty$ we consider the weak$\star$ topology
$\mathcal T_{w\star}$. This means that
for any $g \in L^1$ the mapping 
\[
t \in [t_0,T]\mapsto \langle \xi(t),g\rangle \in\mathbb R
\]
is continuous.
\\
c) The measurability of the process $\xi$, 
defined on $(\Omega, \mathcal F, \{ \mathcal F_t\}_t, \mathbf P)$, is obtained by proving that
the mapping 
\[
(\Omega,\mathcal F_t)\ni \omega\to W^{curl}(\cdot )(\omega)\in C((-\infty,t];H^{h-1})
\]
is measurable, and for every $g\in L^1$
 the mapping 
\[
 C((-\infty,t];H^{h-1}) \ni W^{curl} \mapsto \langle \xi(t),g\rangle
 \in \mathbb  R
\]
is continuous.  Hence, composing these two mappings we find that the mapping 
$(\Omega,\mathcal F_t)\ni\omega \mapsto  \langle \xi(t)(\omega), g
\rangle \in \mathbb R$ is measurable, which means that 
$\omega \mapsto\xi(t)(\omega) $
is  $\mathcal F_t\backslash\mathcal B(\mathcal T_{w\star})$  measurable.

Since for  $\mathbf P$-a.e. $\omega \in \Omega$ the mapping
$t \mapsto \langle\xi(t)(\omega),g\rangle$
is  continuous, then the mapping
$(\omega,t)\mapsto\langle \xi(t)(\omega), g\rangle $
is jointly measurable, that is 
the mapping
\[
(\Omega,(\infty,T])\ni (\omega,t)\mapsto  \xi(t)(\omega)\in L^\infty
\]
is 
$\mathcal F_T\otimes \mathcal B((-\infty,T])\backslash \mathcal B(\mathcal T_{w\star})$ measurable. 
\end{remark}

\section{Continuous dependence with respect to the initial data and
  regularity}

The vorticity equation \eqref{vor} can also be rewritten using the Biot-Savart kernel $K$ as follows:
\begin{equation}\label{eq-vor}
 d\xi+ (K*\xi)\cdot\nabla\xi \ dt +\gamma\xi\ dt = dW^{curl}
 \end{equation}

For every $\chi\in L^\infty$, let $\xi(t;\chi)$ be the unique
solution of equation \eqref{eq-vor} evaluated at time $t>t_0$ given the initial
value $\chi$ at time $t_0$. 
By Theorem  \ref{esiste-unico} we have 
$\mathbf P\left(\xi(t;\chi)\in L^\infty\right)=1$. 

Moreover, we can prove a weak form of continuous dependence on the
initial data.
\begin{theorem}\label{dip-cont}
Let $\gamma\ge 0$ and assume \eqref{noise} with $h>3$.
\\
Given a sequence $\left\{\chi^{n}\right\}_n
\subset L^\infty$ which converges weakly$\star$ in $L^\infty$ to
$\chi\in L^\infty$, we have that, $\mathbf P$-a.s., for every $t>t_0$ the sequence 
$\left\{  \xi(t;\chi^{n})\right\}_n  $
converges weakly$\star$ in $L^\infty$ to $\xi(t;\chi)$.
\end{theorem}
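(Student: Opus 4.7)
The plan is to run a pathwise compactness-and-uniqueness argument. Since $\chi^n \overset{\star}{\rightharpoonup} \chi$ in $L^\infty$, the uniform boundedness principle gives $M := \sup_n \|\chi^n\|_{L^\infty} < \infty$. Fix a full $\mathbf P$-measure event $\Omega_0$ on which the conclusions of Theorem \ref{esiste-unico} hold simultaneously for all the countably many initial data $\chi^n$ and for $\chi$; from now on we work pathwise on $\Omega_0$ with the noise realization $W(\omega)$ frozen, so the argument becomes deterministic. Write $\xi^n(t) := \xi(t;\chi^n)(\omega)$ and $u^n(t) := (K*\xi^n)(t)$. The $L^\infty$ bound from Theorem \ref{esiste-unico}(iii), obtained via subtraction of the Ornstein-Uhlenbeck correction $dz+\gamma z\,dt = dW^{curl}$ and transport along the resulting Lagrangian characteristics, depends only on $\|\chi^n\|_{L^\infty}$ and on $\sup_{[t_0,T]}\|z(\cdot;\omega)\|_{L^\infty}$; hence $\sup_n \|\xi^n\|_{L^\infty([t_0,T]\times D)} \le C(\omega,T,M)$. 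The Biot-Savart law then yields $\sup_n \|u^n\|_{L^\infty(t_0,T;V^{1,p})} < \infty$ for every $p\in[2,\infty)$.

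From the velocity form \eqref{Eu}, $\partial_t u^n$ is uniformly bounded in $L^\infty(t_0,T;X)$ for some negative-order Sobolev space $X$ absorbing the quadratic nonlinearity and the time derivative of $W$. The Aubin-Lions lemma, applied pathwise using the compact embedding $V^{1,4}\hookrightarrow H$, extracts a (not relabelled) subsequence with $u^n \to \bar u$ strongly in $L^2(t_0,T;H)$; passing to a further subsequence, Banach-Alaoglu and the separability of $L^1([t_0,T]\times D)$ give $\xi^n \overset{\star}{\rightharpoonup} \bar\xi$ in $L^\infty([t_0,T]\times D)$ with $\bar\xi = \nabla^\perp\cdot \bar u$.

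Testing \eqref{eq-vor} against $\phi\in C^\infty(D)$ and integrating by parts in the transport term yields
\begin{equation*}
\langle \xi^n(t),\phi\rangle - \langle \chi^n,\phi\rangle - \int_{t_0}^t\int_D \xi^n u^n\cdot\nabla\phi\,dx\,ds + \gamma\int_{t_0}^t\langle\xi^n(s),\phi\rangle\,ds = \langle W^{curl}(t)-W^{curl}(t_0),\phi\rangle.
\end{equation*}
The initial term converges by hypothesis, and the nonlinear term converges because $\xi^n u^n \rightharpoonup \bar\xi\bar u$ in $L^2([t_0,T]\times D)$ (weak-strong product: $\xi^n$ uniformly bounded and weakly$\star$ convergent, $u^n$ strongly convergent in $L^2$). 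Hence $\bar\xi\in L^\infty([t_0,T]\times D)$ is a weak solution of \eqref{eq-vor} with initial datum $\chi$, and the pathwise uniqueness from Theorem \ref{esiste-unico}(iii) forces $\bar\xi = \xi(\cdot;\chi)$. To upgrade to convergence at every $t>t_0$, note that the identity above shows $t\mapsto\langle\xi^n(t),\phi\rangle$ is equicontinuous and uniformly bounded, so Arzel\`a-Ascoli yields uniform convergence on $[t_0,T]$ along a subsequence to $t\mapsto\langle\xi(t;\chi),\phi\rangle$. A standard ``every subsequence has a subsequence'' argument promotes this to convergence of the full sequence, and density of $C^\infty(D)$ in $L^1(D)$ combined with the uniform $L^\infty$-bound on $\xi^n(t)$ extends the convergence to all test functions $g\in L^1$, giving $\xi^n(t)\overset{\star}{\rightharpoonup}\xi(t;\chi)$ in $L^\infty$ for every $t>t_0$.

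The main obstacle is the passage to the limit in the quadratic transport term $\int \xi^n u^n\cdot\nabla\phi$: since $\xi^n$ converges only weakly$\star$, one must promote $u^n$ to a strongly $L^2$-convergent sequence, which forces an Aubin-Lions compactness step and hence careful control of $\partial_t u^n$ in a suitable negative-order space along each noise path. The other ingredient---uniformity in $n$ of the $L^\infty$ vorticity estimate---is needed to know that all this is done inside a fixed bounded set where the weak$\star$ topology is metrizable, so that Banach-Alaoglu applies sequentially.
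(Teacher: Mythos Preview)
Your argument is correct and follows essentially the same route as the paper: a pathwise compactness argument using uniform a priori bounds (from the weak$\star$ boundedness of $\{\chi^n\}$), Aubin--Lions compactness to upgrade the velocities to strong $L^2$ convergence, passage to the limit in the weak vorticity formulation via the weak--strong product, and identification of the limit by pathwise uniqueness. The only cosmetic differences are that the paper subtracts $W$ and $W^{curl}$ directly (rather than an Ornstein--Uhlenbeck correction) and obtains convergence at every $t$ via a.e.-in-time convergence plus continuity of $t\mapsto\langle\xi(t),g\rangle$, whereas you use Arzel\`a--Ascoli; you are also more explicit than the paper about the uniqueness step and the subsequence-of-subsequence closing argument.
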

\begin{proof} 
In the sequel we work pathwise, that is $\omega $ is fixed in $\Omega $ on a set
  of $\mathbf P$-measure 1. We also fix 
$t_0<T$, and will prove the result for $t\in [t_0,T]$. So all the constants
appearing later depend on $\omega $, $t_0$  and $T$. 

By assumption, we have $\chi^n\in L^\infty$ hence  $\chi^n\in L^p$ for any $p\ge 1$; moreover
$_{L^\infty}\langle \chi^{n},g \rangle_{L^1}\rightarrow 
_{L^\infty}\langle \chi,g \rangle_{L^1}$ for all $g \in L^{1}$.

Set $v^n=u^n-W$,
and $\eta^n=\xi^n-W^{curl}$. Then
\begin{equation}\label{Eu2}
\frac{\partial v^{n}}{\partial t}+\gamma v^n+ B(v^{n}+W, v^{n}+W)=-\gamma W
\end{equation}
and
\begin{equation}\label{rot2}
 \frac{\partial\eta^n}{\partial t}+\gamma \eta^n+ (v^n+W) \cdot\nabla\eta^n= -\gamma W
  -(v^n+W) \cdot\nabla W^{curl} . 
\end{equation}

Since the initial  vorticities are bounded  in $L^\infty$, then the initial velocities are
bounded in $V^{1,p}$ for any finite $p$.
As in Theorem \ref{esiste-unico}, we get that  $\mathbf P$-a.s.
\begin{equation*}\label{tim1}
 \sup_n \sup_{t_0\le t \le T}|v^{n}(t)|^{2}<\infty, \ 
 \sup_n \sup_{t_0\le t \le T}\|v^{n}(t)\|^2<\infty, \ 
  \sup_n \left\|\frac{\partial v^{n}}{\partial t}\right\|_{L^2(t_0,T;V^\prime)}<\infty,
\end{equation*}
and
\begin{equation} \label{stirot}
 \sup_n \sup_{t_0\le t \le T}|\eta ^{n}(t)|^2_{\infty}<\infty.
\end{equation}

From these estimates, following \cite{Temam}, we have that $v^{n}$ is bounded in 
$L^{\infty}(t_0,T;V)\cap H^{1}(t_0,T;V^{\prime })$. So, we can extract a 
subsequence,
still denoted by $\{v^n\}_n$, such that 
$v^n$ converges to some function $v$
strongly in $L^{2}(t_0,T; H)$ and weakly$\star$ in $L^{\infty }(t_0,T;V)$, 
$v^{n}(t)$ converges strongly in $H$ for a.e. $t$, and ${v}$ has the same regularity as
$v^{n}$. Moreover $v\in C([t_0,T];H)$. 

We also deduce that  $\eta ^{n}$ converges to some function $\eta$ weakly$\star$ in 
$L^{\infty }((t_0,T)\times D)$. In particular, for any $g\in L^1$ we have
$\int_{t_0}^T \;_{L^\infty}\langle \eta^n(t), g\rangle_{L^1}  dt
\to \int_{t_0}^T \;_{L^\infty}\langle \eta(t), g\rangle_{L^1} dt$ and  for a.e. $t\in [t_0,T]$
$_{L^\infty}\langle \eta^n(t), g\rangle_{L^1} 
\to  _{L^\infty}\langle \eta(t), g\rangle_{L^1}$.
The same holds for the sequence $\{\xi^n\}_n$, that is for any $g\in L^1$ 
\begin{equation}
\int_{t_0}^T \;_{L^\infty}\langle \xi^n(t), g\rangle_{L^1}  dt
\to \int_{t_0}^T \;_{L^\infty}\langle \xi(t), g\rangle_{L^1} dt
\end{equation}
and 
 for a.e. $t\in [t_0,T]$
\[
 _{L^\infty}\langle \xi^n(t), g\rangle_{L^1} 
\to  _{L^\infty}\langle \xi(t), g\rangle_{L^1}.
\]
Now we show that the limit function $\xi$ is the solution of system \eqref{vor} with initial vorticity $\chi$ and that the convergence holds
for any time $t$.

Let $g \in C^1(D)$; then for a.e.  $t\in [t_0,T] $
\begin{equation}
 \langle \xi^{n}(t),g \rangle +\gamma \int_{t_0}^{t}\langle\xi^n(s),g\rangle ds  
 +\int_{t_0}^{t}\langle u^n(s) \cdot \nabla   \xi^n(s),g \rangle ds
 =\langle \chi^{n},g \rangle +\langle W^{curl}(t)-W^{curl}(t_0),g \rangle.
\end{equation}
Writing
\[
\begin{split}
\int_{t_0}^{t}\langle u^n(s) \cdot &\nabla   \xi^n(s),g \rangle ds
-\int_{t_0}^{t}\langle u(s) \cdot \nabla   \xi(s),g \rangle ds
\\&=-
\int_{t_0}^{t}\langle u^n(s) \cdot \nabla  g, \xi^n(s) \rangle ds
+\int_{t_0}^{t}\langle u(s) \cdot \nabla  g, \xi(s) \rangle ds
\\
&=-\int_{t_0}^{t}\langle [u^n(s) -u(s)]\cdot \nabla  g, \xi^n(s) \rangle ds
-\int_{t_0}^{t}\langle u(s) \cdot \nabla  g,  \xi^n(s)-\xi(s) \rangle ds
\end{split}
\]
and using the strong convergence of $u^n$ and the weak convergence of $\xi^n$, in the limit as $n \to \infty$ we get for any $g \in C^1(D)$ 
for a.e.  $t\in [t_0,T] $
\begin{equation}
 \langle \xi (t),g \rangle +\gamma \int_{t_0}^{t}\langle\xi(s),g\rangle ds  
 -\int_{t_0}^{t}\langle u(s) \cdot \nabla  g, \xi(s) \rangle ds
 =\langle \chi,g \rangle+\langle W^{curl}(t)-W^{curl}(t_0) ,g \rangle .
 \end{equation}
Moreover $t\mapsto \langle \xi (t),g \rangle$ is continuous; 
hence the result holds for any $t \in [t_0,T]$.

Now, by \eqref{stirot}
the sequence $\{\xi^n(t)\}$ and $\xi(t)$ 
are bounded in $L^\infty(D)$; since $C^1(D)$ is dense in $L^1(D)$,
 the Hahn-Banach theorem provides  that for any $t$
\[
 _{L^\infty}\langle \xi^{n}(t)-\xi(t),g\rangle_{L^1}\longrightarrow 0\ \qquad
 \forall g\in L^1(D).
\]
\end{proof}

Now, we state a regularity result on any  finite time interval $[t_0,T]$; 
the state space is now $W^{1,4}(D)$ which is smaller than
$L^\infty(D)$. Hence uniqueness holds true.
The upside of working in $W^{1,4}(D)$ is that this is a separable space, whereas
$L^\infty(D)$ is not. This will be used in the next section. 
The downside is that in  $W^{1,4}(D)$ we are not
able to prove a uniform bound needed for the proof of existence of
invariant measures, whereas we prove it in $L^\infty(D)$
(see Proposition \ref{boundedness}).

\begin{theorem} \label{teo-reg}
Let $\gamma\ge 0$ and assume \eqref{noise} with $h>4$.
\\
If $\xi_{0}\in W^{1,4}(D)$, 
then $\xi\in L^\infty(t_0,T;W^{1,4}(D))\cap
C_w([t_0,T];W^{1,4}(D))$ 
$\mathbf P$-a.s..
\\
Moreover for every $t\in [t_0, T]$, 
the map $(\Omega,\mathcal F_t)\ni \omega\to
\xi(t)(\omega)\in W^{1,4}(D)$ is measurable. 
\end{theorem}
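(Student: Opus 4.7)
The plan is to work pathwise (fix $\omega$ in a full measure set), differentiate the vorticity equation \eqref{eq-vor} in space after subtracting the noise, and close a Gronwall estimate in $L^4(D)$ using the Beale--Kato--Majda logarithmic bound. As in the proof of Theorem \ref{dip-cont}, set $\eta=\xi-W^{curl}$, so that
\[
\partial_t\eta+\gamma\eta+u\cdot\nabla\eta = -\gamma W^{curl}-u\cdot\nabla W^{curl},
\qquad \eta(t_0)=\xi_0-W^{curl}(t_0)\in W^{1,4}(D),
\]
with $u=K\ast\xi$ and $\nabla\cdot u=0$. The assumption $h>4$ ensures that $W^{curl}$ together with its first and second spatial derivatives are pathwise bounded in $L^\infty(t_0,T;L^\infty(D))$, and Theorem \ref{esiste-unico}(iii) supplies a pathwise bound $\sup_{[t_0,T]}\|\xi(t)\|_{L^\infty}<\infty$.

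First I would apply $\partial_k$ to the equation for $\eta$, multiply by $|\nabla\eta|^{2}\partial_k\eta$, sum over $k$, and integrate over $D$. The transport term vanishes thanks to $\nabla\cdot u=0$ because $\int u\cdot\nabla(|\nabla\eta|^4/4)\,dx=0$. What survives is
\[
\frac{1}{4}\frac{d}{dt}\|\nabla\eta\|_{L^4}^4 + \gamma\|\nabla\eta\|_{L^4}^4
\le \|\nabla u\|_{L^\infty}\,\|\nabla\eta\|_{L^4}^4
+ C_W(t)\bigl(1+\|\nabla\eta\|_{L^4}^3\bigr),
\]
where $C_W(t)$ is a random but pathwise finite quantity depending only on $\|u\|_{L^\infty}$ (hence on $\|\xi\|_{L^\infty}$, already controlled) and on the $W^{1,\infty}\cap W^{2,4}$ norm of $W^{curl}$.

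The key step is to control $\|\nabla u\|_{L^\infty}$, which is not bounded directly by $\|\xi\|_{L^\infty}$. I would invoke the Beale--Kato--Majda logarithmic inequality in 2D,
\[
\|\nabla u\|_{L^\infty}\le C\bigl(1+\|\xi\|_{L^\infty}\bigl(1+\log^+\|\xi\|_{W^{1,4}}\bigr)\bigr),
\]
which, together with the known $L^\infty$-bound of $\xi$, gives $\|\nabla u(t)\|_{L^\infty}\le C_1(1+\log^+(1+\|\nabla\eta(t)\|_{L^4}))$ with a random $C_1$. Setting $y(t)=1+\|\nabla\eta(t)\|_{L^4}^4$, Young's inequality for the cubic remainder and the log bound yield
\[
\dot y(t)\le C_2\, y(t)\bigl(1+\log y(t)\bigr)+C_2,
\]
with $C_2$ random but finite on $[t_0,T]$. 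Gronwall's lemma in its double-exponential form now gives $y\in L^\infty(t_0,T)$ a.s., i.e.\ $\xi\in L^\infty(t_0,T;W^{1,4}(D))$ a.s. This (and not the PDE manipulation itself) is the main obstacle: the naive estimate produces $\dot y\le C y^{5/4}$, which blows up in finite time; the logarithmic inequality is essential to avoid that spurious blow-up.

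For the weak-continuity statement $\xi\in C_w([t_0,T];W^{1,4}(D))$, once a uniform $W^{1,4}$ bound is known I would combine it with the weak$\star$-$L^\infty$ continuity from Theorem \ref{esiste-unico}(iii): by a density argument ($C^\infty(D)$ is dense in the dual $W^{-1,4/3}(D)$) together with the essential boundedness in $W^{1,4}$, the map $t\mapsto \langle\xi(t),g\rangle$ is continuous for all $g\in W^{-1,4/3}(D)$. Finally, for measurability at fixed $t$, I would argue exactly as in part c) of the remark after Theorem \ref{esiste-unico}: $\xi(t)(\omega)$ factors through the continuous map $\omega\mapsto W^{curl}(\cdot,\omega)$ into $C((-\infty,t];V^{h-1})$, and for each $g$ in a countable dense subset of $W^{-1,4/3}(D)$ the pairing $\omega\mapsto\langle\xi(t)(\omega),g\rangle$ is $\mathcal F_t$-measurable. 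Since $W^{1,4}(D)$ is a separable Banach space, weak and strong Borel $\sigma$-algebras coincide, and Pettis's theorem converts weak measurability into $\mathcal F_t\backslash\mathcal B(W^{1,4})$-measurability.
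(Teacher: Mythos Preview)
Your proposal is correct and follows essentially the same route as the paper: subtract the noise, derive an $L^4$ energy inequality for $\nabla\eta$, close it with the Kato--Ferrari (Beale--Kato--Majda type) logarithmic bound on $\|\nabla u\|_{L^\infty}$, and conclude by a double-exponential Gronwall argument. The only noteworthy difference is in the weak-continuity step: the paper obtains $\partial_i\eta\in H^{1}(t_0,T;W^{-1,2})\cap L^{\infty}(t_0,T;L^{4})$ and invokes Temam's interpolation lemma, whereas you combine the uniform $W^{1,4}$ bound with the already-known weak$\star$ continuity in $L^\infty$ via density of smooth functions in $W^{-1,4/3}$; both arguments are valid and comparable in difficulty.
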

\begin{proof}
We have $W^{1,4}(D)\subset L^\infty(D)$. Hence, 
by the results of Theorem \ref{esiste-unico},  
we only need to prove the estimate for $\nabla \xi$. 
Let us take the gradient of  equation \eqref{vor}: 
\begin{equation}\label{dvor}
d\nabla \xi+\gamma \nabla\xi+\nabla(u\cdot\nabla\xi)\ dt= d\nabla W^{curl}.
\end{equation}
that can be rewritten for each component of the gradient as
\begin{equation}\label{ivor} 
 d \partial_{i} \xi + \gamma \partial_{i} \xi+ \partial_{i}(u\cdot\nabla\xi)\ dt
 = d \partial_i W^{curl}, \qquad i=1,2.
\end{equation}
We look for  $|\nabla\xi| \in L^\infty([t_0,T];L^{4}(D))$. 
Defining $\eta=\xi-W^{curl}$, we get
\begin{equation}\label{eq-z-prime}
 \frac{\partial}{\partial t}\partial_i\eta +\gamma \partial_i\eta  +\partial_i[u\cdot \nabla \eta] = 
 - \partial_i[u \cdot \nabla W^{curl}] -\gamma \partial_iW^{curl},
\qquad i=1,2.
\end{equation}
Let us multiply this equation by $\partial_i  \eta |\nabla \eta|^2$, sum over $i$ and  then integrate over $D$; we get
\begin{equation}\label{L4}
\begin{split}
\frac 14\frac{d}{dt}|\nabla \eta(t)|_4^{4} +\gamma |\nabla \eta(t)|_4^{4} =
&-\sum_{i=1}^2\langle \partial_{i}[u \cdot \nabla \eta] + \partial_i[u
  \cdot \nabla W^{curl}],  \partial_{i}\eta |\nabla \eta|^{2}\rangle\\
  &-\gamma \sum_{i=1}^2\langle \partial_{i} W^{curl},  \partial_{i}\eta |\nabla \eta|^{2}\rangle.
    \end{split}
  \end{equation}
We have
\[
\begin{split}
\sum_i  \langle \partial_{i}[u \cdot \nabla \eta], \partial_{i}\eta |\nabla \eta|^{2}\rangle
&=
\sum_{i,j} \langle \partial_i u_j \partial_j \eta, \partial_i \eta |\nabla \eta|^{2}\rangle + 
\sum_{i,j} \langle u_j \partial^2_{i,j} \eta, \partial_i \eta |\nabla \eta|^{2}\rangle
\\&=:I + II
\end{split}
\]

We use the following result
\begin{lemma}
$II=0$.
\end{lemma}
\begin{proof}
By integration by parts
\begin{equation*}
\begin{split}
II&=\sum_{i,j} \int_{D} u_{j}[\partial_{j}\partial_{i}\eta] 
 \partial_{i}\eta|\nabla \eta|^{2}\\
&=-\sum_i \int_{D} [\sum_j \partial_{j}u_{j}] [\partial_{i}\eta] 
 [\partial_{i}\eta]|\nabla \eta|^{2}
  -\sum_{i,j}\int_{D} u_{j}[\partial_{i}\eta] \partial_{j}
 [\partial_{i}\eta|\nabla \eta|^{2}]\\
&=0- \sum_{i,j}\int_{D} u_{j}[\partial_{i}\eta] [\partial_{j}\partial_i \eta] 
 |\nabla \eta|^{2} -\sum_{i,j}\int_{D} u_{j}[\partial_{i}\eta] 
 [\partial_{i}\eta] \partial_{j}[|\nabla \eta|^{2}]\\
&= -II
  -\sum_{j}\int_{D} u_{j}|\nabla \eta|^{2} \partial_{j}[|\nabla \eta|^{2}]= -3 II
\end{split}
\end{equation*}
Hence, $II=0$. \end{proof}

Now we go back to equation \eqref{L4} and estimate each term in the r.h.s.:
\begin{equation*}
\begin{split}
|I|=|\sum_{i,j} \langle \partial_{i} u_{j}\partial_{j}\eta, \partial_{i}\eta|\nabla \eta|^{2}\rangle| 
&\le C \int_{D}|\nabla u| |\nabla \eta|^{4}
\le C |\nabla u|_{\infty} |\nabla \eta|^{4}_{4}.
 \end{split}
\end{equation*}
Using the H\"older inequality and then the Young inequality, we estimate
another term in \eqref{L4}
\[
\begin{split}
|\sum_{i}\langle \partial_i[u \cdot \nabla W^{curl}], 
   \partial_{i}\eta|\nabla \eta|^{2}\rangle |
&\le
 C \|W^{curl}\|_{V^{2,\infty}} |\nabla u|_{4} ||\nabla \eta|^3|_{4/3} 
\\
&=  
 C \|W^{curl}\|_{V^{2,\infty}} |\nabla u|_{4} |\nabla \eta|_{4}^{3} 
\\
& \le  |\nabla \eta|^4_{4} + C |\nabla u|^4_4  \|W^{curl}\|^4_{V^{2,\infty}}
\end{split}
\]
and we already know that the $ |\nabla u|^4_4$-norm is bounded by Theorem \ref{esiste-unico}.ii.

Similarly for the other term in \eqref{L4} we get

\begin{equation*}
\gamma \sum_{i=1}^2\langle \partial_{i} W^{curl},  \partial_{i}\eta |\nabla \eta|^{2}\rangle
\leq \gamma  |W^{curl}|_{4} |\nabla \eta|_{4}^{3} \leq \frac{\gamma}{2} |\nabla \eta|_{4}^{4} +C(\gamma)  |W^{curl}|_{4}^{4}.
\end{equation*}

 Now, we need an estimate for $ |\nabla u|_{\infty}$. We can find it in 
 Kato \cite{kato}, which deals with the Euler equations in the whole
 plane, or in Ferrari \cite{ferrari},  which deals with the Euler 
equations in a smooth bounded domain of the space; looking at the
proofs of these papers we get for a smooth bounded domain of the plane that
\begin{equation}\label{katostima}
 |\nabla u|_{\infty}\le  C |\xi|_\infty\left[ 1+\log\left(1+\frac{|\nabla
     \xi |_4}{|\xi|_{\infty}}\right)\right].
\end{equation}
Thus, from \eqref{L4} with the above estimates we get
that for any $t \in [0,T]$
\begin{multline}
 \frac 14 \frac{d}{dt}|\nabla \eta(t)|_4^{4}
\le
  |\nabla \eta(t)|_{4}^{4}+
 C|\xi(t)|_\infty\left[1+\log(1+\frac{|\nabla\xi(t)|_4}{|\xi(t)|_{\infty}})\right]
 |\nabla \eta(t)|_4^{4}
\\
 +C(\gamma, t_0,T,|\xi_0|_{\infty},\|W\|_{C([0,\infty);V^{3,\infty})}).
\end{multline}
Gronwall lemma yields 
\begin{equation}
 |\nabla \eta(t)|_{4}^{4}
\le (|\nabla \eta(t_0)|_{4}^{4}+C(t-t_0))
 e^{4\int_{t_0}^t \{1+C|\xi(s)|_\infty
  \left[1+\log(1+\frac{|\nabla\xi(s)|_4}{|\xi(s)|_{\infty}})\right]\}ds}.
\end{equation}
Taking the $\log$ of both sides we get
\begin{multline}
4 \log(|\nabla \eta(t)|_{4}) 
\le
 \log(|\nabla \eta(0)|_{4}^{4}+C(t-t_0))
\\+
4 \int_{t_0}^t \{1+C|\xi(s)|_\infty
  \left[1+\log\Big(1+\frac{|\nabla\xi(s)|_4}{|\xi(s)|_{\infty}}\Big)\right]\}ds.
\end{multline}

Now we use that $\log(x+y) \le \log_+(x+y)\le \log 2 + \log_+ x+\log_+ y$ 
 and 
$- x \log x \le \frac 1e $ (for any $x,y>0$).
Therefore, since $\xi=\eta+W^{curl}$

\begin{multline}
 \log(|\nabla \eta(t)|_{4}) 
\le \frac 14
 \log(|\nabla \eta(t_0)|_{4}^{4}+C(t-t_0))
\\
+C
\int_{t_0}^t \{1+|\xi(s)|_\infty
  \left[C+\log_+(|\nabla\eta(s)|_4)+\log_+(|\nabla W^{curl}(s)|_4)\right]\}ds.
\end{multline}

Using again Gronwall lemma we get
\[
 \sup_{t_0\le t \le T} |\nabla \eta(t)|_{4}
\le
 C(\gamma,t_0, T,|\xi_0|_\infty,\|W\|_{C([t_0,T];V^{3,\infty})}).
\]

Going back to equation \eqref{eq-z-prime} and using the regularity of
$\eta$ obtained so far we get that  ${\bf P}$-a.s.
$\partial_i \eta \in H^1(t_0,T; W^{-1,2}(D))$; 
combining with the fact that  ${\bf P}$-a.s 
$\partial_i \eta\in L^{\infty}(t_0,T; L^4(D))$, then we conclude that 
$\partial_i \eta \in C_w([t_0,T];L^4(D))$, ${\bf P}$-a.s.
(use  Lemma 1.4 of Chapter 3 in  \cite{Temam}).

Finally,  since $\xi=\eta+W^{curl}$ 
and using the regularity of the process $W$ concludes the proof. 

As far the measurability is concerned, this is obtained in a classical
way when working in separable Banach
space see \cite{BessaihF1999}. For a more general theory see e.g. \cite{DPZ}.
\end{proof}

\section{Markov property}
We denote by $ B_b(L^\infty,\mathcal T_{w\star})$ the set of  functions
$\phi:L^\infty\to \mathbb R$ which are bounded  and
$\mathcal B(\mathcal T_{w\star})\setminus\mathcal B^1$-measurable.
Let $\xi(\cdot;\chi)$ be the solution of the vorticity equation 
\eqref{eq-vor} with initial vorticity $\chi$ at time 0.
We define the family of operators (for each $t\ge 0$)
as
\[
(P_t\phi) (\chi)  =\mathbb E\left[\phi(\xi(t;\chi))\right]  
\]
for any $\phi \in B_b(L^\infty,\mathcal T_{w\star})=B_b(L^\infty,\mathcal T_{bw\star})$.

As a  consequence of Theorem \ref{dip-cont} and the Lebesgue dominated convergence
theorem, we infer

\begin{proposition}\label{weak-feller}
The operator  $P_t$  is sequentially weakly$\star$ Feller  in
$L^\infty$  (see \cite{MS}),
that is 
 \begin{equation}\label{swstar}
P_t:SC_b(L^\infty, \mathcal T_{w\star} )\to SC_b(L^\infty, \mathcal T_{w\star} ).
\end{equation}
  \end{proposition}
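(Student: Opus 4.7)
The plan is to unwind the definition of $P_t$ and combine the pathwise continuous dependence result of Theorem \ref{dip-cont} with the dominated convergence theorem. The proof has two pieces: (i) $P_t\phi$ is bounded, which is immediate since $|(P_t\phi)(\chi)|\le \|\phi\|_\infty$ for every $\chi$; (ii) $P_t\phi$ is sequentially weakly$\star$ continuous, which is where the content lies.

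For (ii) I would fix a sequence $\chi^n \overset{\star}{\rightharpoonup} \chi$ in $L^\infty$ and try to show $(P_t\phi)(\chi^n)\to (P_t\phi)(\chi)$. Theorem \ref{dip-cont} gives, for $\mathbf P$-a.e.\ $\omega$ and for the fixed $t>0$, that $\xi(t;\chi^n)(\omega) \overset{\star}{\rightharpoonup} \xi(t;\chi)(\omega)$ in $L^\infty$. Since $\phi \in SC_b(L^\infty,\mathcal T_{w\star})$ is by assumption sequentially weakly$\star$ continuous, applying $\phi$ pathwise yields
\[
\phi\bigl(\xi(t;\chi^n)(\omega)\bigr) \longrightarrow \phi\bigl(\xi(t;\chi)(\omega)\bigr) \qquad \text{for $\mathbf P$-a.e.\ }\omega.
\]
Because $\phi$ is also bounded, the random variables $\phi(\xi(t;\chi^n))$ are uniformly dominated by the constant $\|\phi\|_\infty$, so the Lebesgue dominated convergence theorem gives
\[
(P_t\phi)(\chi^n)=\mathbb E\bigl[\phi(\xi(t;\chi^n))\bigr] \longrightarrow \mathbb E\bigl[\phi(\xi(t;\chi))\bigr]=(P_t\phi)(\chi).
\]

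Before taking expectations one has to verify that $\omega\mapsto \phi(\xi(t;\chi)(\omega))$ is a bona fide random variable. By Theorem \ref{esiste-unico}.iii (and the subsequent Remark) the map $\omega\mapsto \xi(t;\chi)(\omega)$ is $\mathcal F_t\backslash \mathcal B(\mathcal T_{w\star})$-measurable; and by the earlier discussion in subsection 2.2, $\phi\in SC(L^\infty,\mathcal T_{w\star})=C(L^\infty,\mathcal T_{bw\star})$ is $\mathcal B(\mathcal T_{bw\star})$-measurable, hence $\mathcal B(\mathcal T_{w\star})$-measurable since these two Borel $\sigma$-algebras coincide. The composition is therefore $\mathcal F_t$-measurable, and boundedness makes the expectation well defined.

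I do not expect any real obstacle here: the hard analytic content — the pathwise weak$\star$ continuity of $\chi\mapsto \xi(t;\chi)$ and the joint measurability of $\xi$ in $(L^\infty,\mathcal T_{w\star})$ — has already been packaged into Theorem \ref{dip-cont} and Theorem \ref{esiste-unico}. The only point that requires a little care, and is the reason the statement is phrased for the bounded weak$\star$ topology in the background, is the identification of sequential weak$\star$ continuity with $\mathcal T_{bw\star}$-continuity so that measurability of $\phi\circ\xi(t;\chi)$ is automatic; this identification and the equality $\mathcal B(\mathcal T_{w\star})=\mathcal B(\mathcal T_{bw\star})$ were both established in subsection 2.2, so the proof reduces to assembling these ingredients.
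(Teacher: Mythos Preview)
Your proposal is correct and follows exactly the approach the paper indicates: the paper simply states that the proposition is a consequence of Theorem~\ref{dip-cont} and the Lebesgue dominated convergence theorem, and your argument is precisely the unpacking of that one-line justification. Your additional remarks on measurability of $\omega\mapsto\phi(\xi(t;\chi)(\omega))$ via the identifications in subsection~2.2 are a welcome clarification that the paper leaves implicit.
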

This is equivalent to say that
\begin{equation}\label{formula-swstar}
P_t: C_b(L^\infty,\mathcal T_{bw\star})\to C_b(L^\infty,\mathcal T_{bw\star}).
\end{equation}
This property will be used in the proof of the main Theorem \ref{th-inv}. Notice that
the bounded weak$\star$ topology  is not metrizable; 
hence, continuity and sequential continuity are different.
So  one proves 
$\mathcal T_{bw\star}$-continuity by means of sequential 
$\mathcal T_{w\star}$-continuity, which is more feasible.

Now we want to show that $P_t$ defines a Markov semigroup.
As far as we know, we have not seen the Markov 
property stated or proved before for the stochastic Euler equations.
This  requires some care since the classical theory for Markov processes
is usually set in Polish spaces.

We proceed in this way. 
First we state an auxiliary result working in the separable Banach space $W^{1,4}(D)$.

\begin{lemma}\label{mark-in-sep}
Let $\gamma\ge 0$ and assume \eqref{noise} with $h>4$.
\\
For every $\phi\in SC_b(L^\infty, \mathcal T_{w\star} )$,  
$\chi \in W^{1,4}(D)$  and $t,s>0$ we have
\begin{equation}\label{Markov-reg}
\mathbf E\left[  \phi\left(  \xi(t+s;\chi)\right)  |{\mathcal F}_{t}\right]
=\left(
P_{s}\phi\right)  \left(  \xi(t;\chi)\right)  \qquad \mathbf P\text{-a.s.}.
\end{equation}
\end{lemma}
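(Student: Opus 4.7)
The plan is to reduce the Markov property to a standard freezing argument, using pathwise uniqueness to decompose $\xi(t+s;\chi)$ as a solution restarted at time $s=0$ from the random datum $\xi(t;\chi)$ and driven by the post-$t$ Wiener increments, which are independent of $\mathcal F_t$. Precisely, I introduce the shifted Wiener process $\tilde W(r):=W(t+r)-W(t)$ for $r\ge 0$ and let $\tilde\xi(\cdot;\chi')$ denote the unique solution of \eqref{eq-vor} on $[0,\infty)$ with initial datum $\chi'$ at time $0$, driven by $\tilde W^{\mathrm{curl}}$. Since $\chi\in W^{1,4}(D)$, Theorem \ref{teo-reg} gives $\xi(t;\chi)\in W^{1,4}(D)$ $\mathbb P$-a.s., so $\tilde\xi(s;\xi(t;\chi))$ is well defined; pathwise uniqueness (Theorem \ref{esiste-unico}.iii) then yields $\xi(t+s;\chi)=\tilde\xi(s;\xi(t;\chi))$ $\mathbb P$-a.s.

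Next I establish joint measurability of the map $(\chi',\omega)\mapsto\phi(\tilde\xi(s;\chi')(\omega))$ on $W^{1,4}(D)\times\Omega$ with values in $\mathbb R$. For each fixed $\chi'\in W^{1,4}(D)$, the map $\omega\mapsto\tilde\xi(s;\chi')(\omega)$ is $\mathcal F\setminus\mathcal B(\mathcal T_{w\star})$-measurable by Theorem \ref{esiste-unico}.iii, and composition with the $\mathcal B(\mathcal T_{w\star})$-measurable bounded function $\phi$ produces a bounded real $\mathcal F$-measurable random variable. For each fixed $\omega$ outside a fixed null set, if $\chi'_n\to\chi'$ in $W^{1,4}$ norm, then by the Sobolev embedding in dimension two, $\chi'_n\to\chi'$ in $L^\infty$ norm and in particular weakly$\star$; Theorem \ref{dip-cont} then gives $\tilde\xi(s;\chi'_n)(\omega)\overset{\star}{\rightharpoonup}\tilde\xi(s;\chi')(\omega)$ in $L^\infty$, and the sequential weak$\star$ continuity of $\phi$ yields $\phi(\tilde\xi(s;\chi'_n)(\omega))\to\phi(\tilde\xi(s;\chi')(\omega))$. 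So the composite is continuous in $\chi'\in W^{1,4}(D)$ (which is separable metric) and measurable in $\omega$, and a Carath\'eodory-type argument produces joint measurability.

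With joint measurability in hand, since $\xi(t;\chi)$ is $\mathcal F_t$-measurable with values in the separable space $W^{1,4}(D)$ and the process $\tilde W$ is independent of $\mathcal F_t$ with the same law as $W|_{[0,\infty)}$, the standard freezing lemma gives
\[
\mathbb E\!\left[\phi\bigl(\tilde\xi(s;\xi(t;\chi))\bigr)\mid\mathcal F_t\right]
=\mathbb E\!\left[\phi\bigl(\tilde\xi(s;\chi')\bigr)\right]\Big|_{\chi'=\xi(t;\chi)}
=(P_s\phi)(\xi(t;\chi)),
\]
which combined with $\xi(t+s;\chi)=\tilde\xi(s;\xi(t;\chi))$ yields \eqref{Markov-reg}. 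The main obstacle is the joint measurability step: because $L^\infty$ is non-separable and $\phi$ is only measurable for the weak$\star$ Borel $\sigma$-algebra, the usual measurable-selection arguments on the path space do not apply directly. The separability of $W^{1,4}(D)$, the continuous Sobolev embedding $W^{1,4}(D)\hookrightarrow L^\infty(D)$ in two dimensions, and the weak$\star$ continuous dependence on initial data from Theorem \ref{dip-cont} are precisely what allow the Carath\'eodory argument to close; this is why the Markov property is proven first on $W^{1,4}(D)$ and only afterwards extended to $L^\infty(D)$ by density.
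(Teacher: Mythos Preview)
Your argument is correct and follows essentially the same route as the paper: both use pathwise uniqueness to restart the solution at time $t$, the separability of $W^{1,4}(D)$ together with the weak$\star$ continuous dependence of Theorem \ref{dip-cont}, and the independence of the post-$t$ increments from $\mathcal F_t$. The only difference is packaging: the paper carries out the simple-function approximation of the $W^{1,4}$-valued random initial datum explicitly (its Steps 2--4), whereas you wrap this into a Carath\'eodory joint-measurability claim and invoke the freezing lemma as a black box---these are the same argument, since the standard proof of the freezing lemma is precisely that simple-function approximation.
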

\begin{proof}
We divide the proof in four parts. 
For short let $\xi(t; \chi)$ be denoted by $\xi_{t}^{\chi}$; moreover
we use the notation  $\xi_{t,t+s}^{\eta}$ to denote the  solution of
\eqref{eq-vor} (on the  time interval $[t,t+s]$)  
evaluated at time $t+s$ and  started from  $\eta$ at time $t$.

{\bf Step 1}. Given $\phi\in SC_b(L^\infty, \mathcal T_{w\star} )$, 
$\chi\in W^{1,4}(D)$ and $t,s>0$ , \eqref{Markov-reg} is equivalent to 
\begin{equation}\label{Markov-reg2}
\mathbf E\left[  \phi\left(  \xi_{t+s}^{\chi}\right)  Z\right]  
=\mathbf E\left[( P_{s}\phi)  (\xi_{t}^{ \chi})  Z\right]
\end{equation}
for every bounded ${\mathcal F}_{t}$-measurable random variable $Z$. 

Given a $W^{1,4}(D)$-valued ${\mathcal F}_{t}$-measurable random variable 
$\eta$, denote by $\xi_{t,t+s}^{\eta}$ the unique solution of \eqref{vor}
on the time interval $[t,t+s]$ with initial vorticity $\xi(t)=\eta$.
Since by uniqueness
\[
\xi_{t+s}^{\chi}=\xi_{t,t+s}^{\xi_{t}^{\chi}}\quad\text{(}\mathbf P\text{-a.s.)}%
\]
and $\mathbf P\left(  \xi_{t}^{\chi}\in W^{1,4}(D)\right)  =1$ by
Theorem \ref{teo-reg}, 
in order to get \eqref{Markov-reg2} 
it is sufficient to prove that
\begin{equation}\label{mark-2}
\mathbf E\left[  \phi\left(  \xi_{t,t+s}^{\eta}\right)  Z\right]  
=\mathbf E\left[
\left( P_{s}\phi\right)  \left(  \eta\right)  Z\right]
\end{equation}
for every $W^{1,4}(D)$-valued ${\mathcal F}_{t}$-measurable random variable $\eta$.

\textbf{Step 2}. Given such a random variable $\eta$ and since $W^{1,4}(D)$ is a
separable metric space (in contrast to $L^\infty(D)$), there exists
a sequence $\left\{ \eta_{n}\right\}_n  $ of ${\mathcal F}_{t}$-measurable
$W^{1,4}(D)$-valued random variables of the form
\[
\eta_{n}=\sum_{i=1}^{k_{n}}\eta_{n}^{\left(  i\right)
}1_{A_{n}^{(i)}}
\]
with $\eta_{n}^{\left(  i\right)  }\in W^{1,4}(D)$ and $A_{n}^{\left(
i\right) }\in {\mathcal F}_{t}$ with $\{A_n^{(1)}, A_n^{(2)}, \ldots, A_n^{(k_n)}\}$ 
a partition of $\Omega$, 
such that $\left\{  \eta_{n}\right\}  $
converges $\mathbf P$-a.s. strongly in $W^{1,4}(D)$ to $\eta$. If we assume  that
\[
\mathbf E\left[  \phi\left(  \xi_{t,t+s}^{\eta_{n}}\right)  Z\right]
=\mathbf E\left[ \left(  P_{s}\phi\right)  \left(  \eta_{n}\right)
Z\right]\qquad \forall n
\]
then,  since the strong convergence of $\eta_{n}$ in $W^{1,4}(D)$
implies the weak$\star$ convergence in $L^\infty$, using Proposition
\ref{weak-feller} we have that
 $\left(  P_{s}\phi\right) \left(  \eta_{n}\right)  $
converges $\mathbf P$-a.s. to $(P_{s}\phi) (\eta)$. 
On the other side, using Theorem \ref{dip-cont} $\xi_{t,t+s}^{\eta_{n}} $
converges weakly$\star$ in $L^\infty$ to $\xi_{t,t+s}^{\eta}$, so
$\phi\left( \xi_{t,t+s}^{\eta_{n}}\right)  $ also converges
to $\phi\left( \xi_{t,t+s}^{\eta}\right)$ $\mathbf P$-a.s. 
The proof of \eqref{mark-2}  is completed by using the Lebesgue dominated convergence
theorem. 

\textbf{Step 3}. Therefore, it is sufficient to prove \eqref{mark-2}
for every random variable $\eta$ of the form
\[
\eta=\sum_{i=1}^{k}\eta^{\left(  i\right)  }1_{A^{\left(  i\right)  }}%
\]
with $\eta^{\left(  i\right)  }\in W^{1,4}(D)$, $A^{\left(  i\right)
}\in {\mathcal F}_{t} $ and $\{A^{(1)}, A^{(2)}, \ldots, A^{(k)}\}$ 
a partition of $\Omega$. Notice that
\[
\left(  P_{s}\phi\right)  \left(  \eta\right)
=\sum_{i=1}^{k}\left( P_{s}\phi\right)  \left(  \eta^{\left(
i\right)  }\right)  1_{A^{\left( i\right)  }} \qquad \mathbf P-a.s.
\]

Moreover $\xi_{t,t+s}^{\eta}=\sum_{i=1}^{k}\xi_{t,t+s}^{\eta^{\left(  i\right)  }%
}1_{A^{\left(  i\right)  }}$, since we have solved the equation pathwise.
Hence
\[
\phi\left(  \xi_{t,t+s}^{\eta}\right)  =\sum_{i=1}^{k}\phi\left(
\xi_{t,t+s}^{\eta^{\left(  i\right)  }}\right)  1_{A^{\left(  i\right)
}}.
\]

Thus it is sufficient to prove
\begin{equation}\label{mark-3}
\mathbf E\left[  \phi\left(  \xi_{t,t+s}^{\eta^{\left(  i\right)  }}\right)
1_{A^{\left(  i\right)  }}Z\right]  =\mathbf E\left[  \left(
P_{s}\phi\right) \left(  \eta^{\left(  i\right)  }\right)
1_{A^{\left(  i\right)  }}Z\right]
\end{equation}
for every $i$.

\textbf{Step 4}. Since $1_{A^{\left(  i\right)  }}$ is a bounded 
${\mathcal F}_{t}$-measurable random variable, 
in order to prove \eqref{mark-3} it is sufficient that
\[
\mathbf E\left[  \phi\left(  \xi_{t,t+s}^{\eta}\right)  Z\right]  
=\mathbf E\left[
\left( P_{s}\phi\right)  \left(  \eta\right)  Z\right]
\]
for every bounded ${\mathcal F}_{t}$-measurable random variable $Z$ and every
deterministic element $\eta\in W^{1,4}(D)$. The random variable $\xi_{t,t+s}^{\eta}$
depends only on the increments of the  Wiener process between $t$
and $t+s$, hence it is independent of ${\mathcal F}_{t}$. Therefore
\[\begin{split}
\mathbf E\left[  \phi\left(  \xi_{t,t+s}^{\eta}\right)  Z\right]  
&=
\mathbf E\left[  \mathbf E[\phi\left(  \xi_{t,t+s}^{\eta}\right)  Z|{\mathcal F}_t]\right] 
=
\mathbf E\left[Z\  \mathbf E[\phi\left(  \xi_{t,t+s}^{\eta}\right)  |{\mathcal F}_t]\right] 
\\
&=
\mathbf E\left[Z\  \mathbf E[\phi\left(  \xi_{t,t+s}^{\eta}\right)]\right]
=\mathbf E\left[
\phi\left(  \xi_{t,t+s}^{\eta}\right)  \right]  \mathbf E\left[  Z\right]
.
\end{split}
\]
Since $\xi_{t,t+s}^{\eta}$ and  $\xi_{s}^{\eta}$ have the same law, 
we have $\mathbf E\left[  \phi\left(
\xi_{t,t+s}^{\eta}\right)  \right]  =\mathbf E\left[ \phi\left(
\xi_{s}^{\eta}\right)  \right]  $ and thus we have proved that 
\[
\mathbf E\left[  \phi\left(  \xi_{t,t+s}^{\eta}\right)  Z\right]  
=\mathbf E\left[  \phi\left(  \xi_{s}^{\eta}\right) \right] \ \mathbf E\left[ Z\right] 
=\left(
P_{s}\phi\right)  \left(  \eta\right)  \mathbf E\left[  Z\right]
=\mathbf E\left[  \left( P_{s}\phi\right)  \left(  \eta\right)  Z\right]
.
\]
The proof is complete.
\end{proof}

Now, we are ready to state the main result related to the Markov property.
The following proposition is one possible Markov property 
for the family of solutions to equation \eqref{eq-vor}.
\begin{proposition}\label{proMARK}
Let $\gamma\ge 0$ and assume \eqref{noise} with $h>4$.
\\
For every $\phi\in SC_b(L^\infty,\mathcal T_{w\star})$,  
$\chi \in L^\infty$ and $t,s>0$,  we have
\begin{equation}\label{markk}
 \mathbf E\left[  \phi\left(  \xi(t+s;\chi)\right)  |{\mathcal F}_{t}\right]
 =\left( P_{s}\phi\right)  \left(  \xi(t;\chi) \right)  \qquad \mathbf P-a.s..
\end{equation}
\end{proposition}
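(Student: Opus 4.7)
The plan is to bootstrap from Lemma \ref{mark-in-sep}, which already establishes \eqref{markk} for initial data in the separable space $W^{1,4}(D)$, to general $\chi\in L^\infty$ by a weak$\star$-density approximation. Equivalently to \eqref{markk}, I would prove
\begin{equation*}
\mathbf E\!\left[\phi(\xi(t+s;\chi))\,Z\right] \;=\; \mathbf E\!\left[(P_s\phi)(\xi(t;\chi))\,Z\right]
\end{equation*}
for every bounded $\mathcal F_t$-measurable random variable $Z$. Note that the right-hand side is well defined as a random variable: by Proposition \ref{weak-feller}, $P_s\phi\in SC_b(L^\infty,\mathcal T_{w\star})=C_b(L^\infty,\mathcal T_{bw\star})$, hence is $\mathcal B(\mathcal T_{bw\star})=\mathcal B(\mathcal T_{w\star})$-measurable, while $\xi(t;\chi)$ is $\mathcal F_t\backslash \mathcal B(\mathcal T_{w\star})$-measurable by Theorem \ref{esiste-unico}(iii).

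Given $\chi\in L^\infty$, I would construct a sequence $\{\chi_n\}\subset W^{1,4}(D)$ that is uniformly bounded in $L^\infty$ and converges to $\chi$ weakly$\star$ in $L^\infty$. A natural choice on the torus is $\chi_n=\rho_n*\chi$ with $\{\rho_n\}$ a standard mollifier: each $\chi_n$ is smooth, in particular $\chi_n\in W^{1,4}(D)$, $\|\chi_n\|_{L^\infty}\le \|\chi\|_{L^\infty}$, and for every $g\in L^1$
\begin{equation*}
\int_D(\rho_n*\chi)\,g\,dx \;=\; \int_D \chi\,(\rho_n*g)\,dx \;\longrightarrow\; \int_D \chi\,g\,dx,
\end{equation*}
since $\rho_n*g\to g$ in $L^1$. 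Applying Lemma \ref{mark-in-sep} to each $\chi_n$ gives
\begin{equation*}
\mathbf E\!\left[\phi(\xi(t+s;\chi_n))\,Z\right] \;=\; \mathbf E\!\left[(P_s\phi)(\xi(t;\chi_n))\,Z\right].
\end{equation*}

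To finish, I would let $n\to\infty$ on both sides. By Theorem \ref{dip-cont}, $\mathbf P$-a.s.\ one has $\xi(\tau;\chi_n)\overset{\star}{\rightharpoonup}\xi(\tau;\chi)$ in $L^\infty$ for $\tau=t$ and $\tau=t+s$. Sequential weak$\star$ continuity of $\phi$ then yields $\phi(\xi(t+s;\chi_n))\to\phi(\xi(t+s;\chi))$ $\mathbf P$-a.s., and since $P_s\phi\in SC_b(L^\infty,\mathcal T_{w\star})$ by Proposition \ref{weak-feller}, also $(P_s\phi)(\xi(t;\chi_n))\to(P_s\phi)(\xi(t;\chi))$ $\mathbf P$-a.s. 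Boundedness of $\phi$, $P_s\phi$, and $Z$ lets me invoke the Lebesgue dominated convergence theorem to pass to the limit in both expectations, yielding the identity. The main delicate point is to select an approximating sequence for $\chi\in L^\infty$ that lies in $W^{1,4}(D)$ while enjoying weak$\star$ convergence and a uniform $L^\infty$-bound — mollification handles this cleanly and is precisely what Theorem \ref{dip-cont} and the weak$\star$ Feller property are tailored to exploit.
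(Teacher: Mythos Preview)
Your proof is correct and follows essentially the same approach as the paper: approximate $\chi\in L^\infty$ by a weak$\star$-convergent sequence in $W^{1,4}(D)$, invoke Lemma \ref{mark-in-sep} for the approximants, then pass to the limit using Theorem \ref{dip-cont}, Proposition \ref{weak-feller}, and dominated convergence. The only difference is that you construct the approximating sequence explicitly via mollification (which also gives the uniform $L^\infty$ bound for free), whereas the paper simply cites the weak$\star$ density of $W^{1,4}(D)$ in $L^\infty$.
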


\begin{proof}
The space $W^{1,4}(D)$ is densely embedded in the
space $(L^\infty,\mathcal T_{w\star})$, see \cite{brezis}. By the way,
this shows that  $(L^\infty,\mathcal T_{w\star})$ is a separable space.

 Thus, given $\chi \in L^\infty$ there is a
sequence $\left\{  \chi^{n}\right\}  \subset W^{1,4}(D)$ which
converges weakly$\star$ in $L^\infty$ to $\chi$. Lemma 
\ref{mark-in-sep} infers 
that, given $\phi\in SC_b(L^\infty, \mathcal T_{w\star} )$ and  $t,s>0$,
\[
 \mathbf E\left[  \phi\left(  \xi(t+s;\chi^{n})\right)  |{\mathcal F}_{t}\right]
 =\left(
 P_{s}\phi\right)  \left(  \xi(t;\chi^{n})\right)  \qquad \mathbf P-a.s.
\]
This means that
\[
\mathbf E\left[  \phi\left(  \xi(t+s;\chi^{n})\right)  Z\right]
=\mathbf E\left[ \left(  P_{s}\phi\right)  \left(\xi(t;\chi^{n})\right)  Z\right]
\]
for every bounded ${\mathcal F}_{t}$-measurable random variable $Z$.

From Theorem \ref{dip-cont} we know that for any $r>0$ 
$\left\{  \xi(r;\chi^{n})\right\}  $ converges
weakly$\star$ in $L^\infty$ to $\xi(r;\chi)$, $\mathbf P$-a.s.. 
Hence $\left(  P_{s}\phi\right)  \left(
\xi(t;\chi^{n})\right)  $ converges to $\left(
P_{s}\phi\right)  \left(  \xi(t;\chi)\right)  $, $\mathbf P$-a.s., and
$\phi\left(  \xi(t+s;\chi^{n})\right)  $ converges to
$\phi\left( \xi(t+s;\chi)\right)  $, $\mathbf P$-a.s., and thus by
Lebesgue dominated convergence theorem we can pass to the limit in
the previous equation and get
\[
\mathbf E\left[  \phi\left(  \xi(t+s;\chi)\right)  Z\right]  
=\mathbf E\left[
\left( P_{s}\phi\right)  \left(  \xi(t;\chi)\right)  Z\right]  \qquad \mathbf P-a.s.\]
This is equivalent to \eqref{markk}.
\end{proof}

\begin{corollary}
For any $s,t\ge 0$ we have
$P_{t+s}=P_{t}P_{s}$ on $ SC_b(L^\infty, \mathcal T_{w\star} )$.
\end{corollary}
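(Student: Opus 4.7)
The plan is to derive the semigroup identity $P_{t+s}=P_tP_s$ as a direct consequence of the Markov property in Proposition \ref{proMARK} together with the sequential weak$\star$ Feller property in Proposition \ref{weak-feller}, via the tower property of conditional expectation.

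First I would fix $\phi\in SC_b(L^\infty,\mathcal T_{w\star})$ and $\chi\in L^\infty$. By the Feller property (Proposition \ref{weak-feller}), the function $P_s\phi$ lies again in $SC_b(L^\infty,\mathcal T_{w\star})$, and in particular it is bounded and $\mathcal B(\mathcal T_{w\star})$-measurable, so $(P_t(P_s\phi))(\chi)$ is well-defined. Unwinding the definitions,
\begin{equation*}
(P_{t+s}\phi)(\chi)=\mathbf E\bigl[\phi(\xi(t+s;\chi))\bigr]
=\mathbf E\bigl[\mathbf E[\phi(\xi(t+s;\chi))\mid \mathcal F_t]\bigr]
\end{equation*}
by the tower property. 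Next I would apply Proposition \ref{proMARK} to rewrite the inner conditional expectation as $(P_s\phi)(\xi(t;\chi))$, obtaining
\begin{equation*}
(P_{t+s}\phi)(\chi)=\mathbf E\bigl[(P_s\phi)(\xi(t;\chi))\bigr]=\bigl(P_t(P_s\phi)\bigr)(\chi).
\end{equation*}
Since $\chi\in L^\infty$ was arbitrary, this yields $P_{t+s}\phi=P_tP_s\phi$ as functions on $L^\infty$, i.e. the claimed identity holds on $SC_b(L^\infty,\mathcal T_{w\star})$.

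There is no real obstacle here; the only point requiring a moment of care is the well-posedness of $P_t(P_s\phi)$, for which one must invoke Proposition \ref{weak-feller} to know that $P_s\phi$ remains in the class where $P_t$ acts naturally (and is in particular measurable with respect to $\mathcal B(\mathcal T_{w\star})=\mathcal B(\mathcal T_{bw\star})$). Once this is observed, the argument is a two-line application of the tower property and Proposition \ref{proMARK}. The boundary cases $s=0$ or $t=0$ are handled by the convention $P_0=\mathrm{Id}$, which follows from the definition $(P_0\phi)(\chi)=\mathbf E[\phi(\xi(0;\chi))]=\phi(\chi)$.
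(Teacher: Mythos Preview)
Your proof is correct and follows essentially the same approach as the paper: take expectation of the Markov identity in Proposition~\ref{proMARK} and recognize the two sides as $(P_{t+s}\phi)(\chi)$ and $(P_t(P_s\phi))(\chi)$. You are slightly more explicit about the tower property and about invoking Proposition~\ref{weak-feller} for the well-definedness of $P_t(P_s\phi)$, but the argument is the same.
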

\begin{proof}
Taking the expectation in \eqref{markk}, we have
\[
\mathbf E\left[  \phi\left(  \xi(t+s;\chi)\right)  \right]  
=\mathbf E\left[
\left( P_{s}\phi\right)  \left(  \xi(t;\chi)\right)  \right]
\]
which  can be rewritten as
$(P_{t+s} \phi)(\chi)=\left( P_{t}(  P_{s}\phi)  \right) (\chi)$.
\end{proof}

\section{Invariant measures}
Let us consider the Markov semigroup $\{P_t\}_{t\ge 0}$ acting in 
$SC_b(L^\infty,\mathcal T_{w\star})=C_b(L^\infty,\mathcal T_{bw\star})$, associated to the equation \eqref{eq-vor}.
We say that a probability measure $\mu$  on $\mathcal B(\mathcal T_{bw\star})$ is an invariant measure for it if
\begin{equation}\label{def-inv}
  \int P_t \phi\ d\mu=\int \phi\ d\mu
  \qquad \forall t\ge 0, \forall \phi \in C_b(L^\infty,\mathcal T_{bw\star})
\end{equation}
We want to prove existence of an invariant measure 
by means of Krylov-Bogoliubov's method. We recall that already  
Maslowski and Seidler in \cite{MS} used this method with weak
topologies, 
but assuming that the state space is a
separable Hilbert space. Anyway also when dealing with the space
$L^\infty$ we can proceed along the lines of
Krylov-Bogoliubov's method in order to prove existence of invariant
measures defined on $\mathcal B(\mathcal T_{bw\star})$.

This is our result
\begin{theorem}\label{th-inv}
Let $\gamma> 0$ and assume \eqref{noise} with $h>4$.
\\
Then there exists at least one invariant measure 
for the stochastic equation \eqref{eq-vor}.
\end{theorem}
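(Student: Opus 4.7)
The plan is to adapt the Krylov--Bogoliubov argument to the non-metrizable space $(L^\infty,\mathcal{T}_{bw\star})$. The three ingredients already available are: the sequentially weakly$\star$ Feller property from Proposition \ref{weak-feller}; the uniform-in-time moment bound $\sup_{t\ge 0}\mathbb{E}\|\xi(t;\chi)\|_\infty \le C(\chi)$ afforded by Proposition \ref{boundedness} (this is where the dissipation $\gamma>0$ is used); and the fact that norm-bounded balls $K_R=\{\xi\in L^\infty:\|\xi\|_\infty\le R\}$ are $\mathcal{T}_{w\star}$-compact by Alaoglu, coincide with $\mathcal{T}_{bw\star}$-compact subsets, and are metrizable in $\mathcal{T}_{w\star}$ because $L^1$ is separable.

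First I fix some $\chi\in L^\infty$ and form the Cesàro averages
\[
\mu_T(A)=\frac{1}{T}\int_0^T \mathbb{P}(\xi(t;\chi)\in A)\, dt, \qquad A\in \mathcal{B}(\mathcal{T}_{bw\star}).
\]
These are well-defined probability measures thanks to the joint measurability statement in Theorem \ref{esiste-unico} together with the identification $\mathcal{B}(\mathcal{T}_{bw\star})=\mathcal{B}(\mathcal{T}_{w\star})$. By Chebyshev and Proposition \ref{boundedness}, $\mu_T(K_R)\ge 1-C(\chi)/R$ uniformly in $T$. Choosing $R_n\uparrow\infty$, each $(K_{R_n},\mathcal{T}_{w\star})$ is a compact metrizable space, so applying Prokhorov on each $K_{R_n}$ and extracting diagonally yields a sequence $T_n\to\infty$ and a Borel probability $\mu$ on $L^\infty$ such that, for every $\phi\in C_b(L^\infty,\mathcal{T}_{bw\star})$,
\[
\int \phi\, d\mu_{T_n}\;\longrightarrow\;\int \phi\, d\mu.
\]
The convergence on each $K_R$ is legitimate because, by definition of $\mathcal{T}_{bw\star}$, every $\phi\in C_b(L^\infty,\mathcal{T}_{bw\star})$ restricts to a $\mathcal{T}_{w\star}$-continuous bounded function on $K_R$, and the masses outside $K_R$ are controlled uniformly.

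Finally I verify invariance. For any $s>0$ and $\phi\in C_b(L^\infty,\mathcal{T}_{bw\star})$, Proposition \ref{weak-feller} guarantees $P_s\phi\in C_b(L^\infty,\mathcal{T}_{bw\star})$, so both $\phi$ and $P_s\phi$ are admissible test functions for the weak convergence above. By the Markov property (Proposition \ref{proMARK}) and a telescoping computation,
\[
\int P_s\phi\, d\mu_T - \int \phi\, d\mu_T = \frac{1}{T}\int_T^{T+s}\mathbb{E}\phi(\xi(t;\chi))\, dt - \frac{1}{T}\int_0^s \mathbb{E}\phi(\xi(t;\chi))\, dt,
\]
which is bounded in absolute value by $2s\|\phi\|_\infty/T$ and hence vanishes as $T_n\to\infty$. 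Passing to the limit along the subsequence gives $\int P_s\phi\, d\mu=\int \phi\, d\mu$, i.e.\ \eqref{def-inv}. The main obstacle is the non-metrizability of $\mathcal{T}_{bw\star}$ on all of $L^\infty$, which rules out a direct Prokhorov argument; this is bypassed by concentrating the measures on the metrizable compact balls $K_R$ and exploiting that on such balls $\mathcal{T}_{bw\star}$ coincides with $\mathcal{T}_{w\star}$, together with the key fact that $C_b(L^\infty,\mathcal{T}_{bw\star})=SC_b(L^\infty,\mathcal{T}_{w\star})$ is stable under the semigroup $P_t$.
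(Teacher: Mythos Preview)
Your strategy is exactly the paper's: Krylov--Bogoliubov in $(L^\infty,\mathcal T_{bw\star})$, using the sequentially weak$\star$ Feller property and compactness of the balls $K_R$. Two points, however, deserve correction.

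First, Proposition~\ref{boundedness} does \emph{not} give a moment bound $\sup_{t\ge 0}\mathbb E\|\xi(t;\chi)\|_\infty\le C(\chi)$. It only produces a random variable $r$, $\mathbf P$-a.s.\ finite, with $\sup_{t_0\le 0}|\xi(0;\xi(t_0)=0)|_\infty\le r$; nothing says $\mathbb E[r]<\infty$. What you actually need (and what the paper uses) is Corollary~\ref{c-b}, which converts this into a uniform-in-time bound in probability for the initial datum $\chi=0$; that is already tightness, and no Chebyshev step is required. Note also that both results are stated for $\chi=0$, so either take $\chi=0$ or observe that the proof of Proposition~\ref{boundedness} extends to general $\chi\in L^\infty$ with only cosmetic changes.

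Second, your ``Prokhorov on each $K_{R_n}$ plus diagonal extraction'' step is not quite complete as written: the measures $\mu_T$ are not supported on any fixed $K_R$, and passing from weak limits of the restrictions $\mu_T|_{K_R}$ to a single limit measure on $L^\infty$ requires checking a consistency condition across the balls (weak limits and restriction to closed sets do not commute in general). The paper sidesteps this by invoking Jakubowski's non-metric Prokhorov theorem (Theorem~3 in \cite{Ja}), whose sole hypothesis---that $(L^\infty,\mathcal T_{bw\star})$ is countably separated by continuous functions---follows immediately from the separability of $L^1$. Your approach can be made rigorous (it is essentially a proof of Jakubowski's theorem in this special case), but citing \cite{Ja} is cleaner. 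The invariance verification is identical to the paper's.
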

\begin{proof}
The idea is to construct a  sequence  of measures
$\{\mu_n\}_{n \in \mathbb N}$, which  is $\mathcal T_{bw\star}$-tight;
from it we can extract a subsequence converging to a measure $\mu$;
then we show that  the limit measure
$\mu$ is an invariant measure, thanks to \eqref{formula-swstar}.

We denote by $m_t$ the law of the random variable $\xi(t;0)$ on $\mathcal B(\mathcal T_{bw\star})$; 
 since the mapping $(\omega,t)\mapsto \xi(t;x)(\omega)$
is jointly measurable, we can integrate with respect to both variables 
and define the  probability measure on $\mathcal B(\mathcal T_{bw\star})$
  \[
  \mu_n=\frac 1n \int_0^n m_t\ dt
  \]
for any $n>0$.

We recall that the set $\{\|x\|_{L^\infty}\le R\}$ is $\mathcal T_{bw\star}$-compact.
From Corollary \ref{c-b}, which will be proved in the next subsection,
we have that the sequence 
 $\{\mu_n\}_{n \in \mathbb N}$   is $\mathcal T_{bw\star}$-tight, that is
  \[
  \forall \epsilon >0 \ \exists K_\epsilon \ \mathcal T_{bw\star}\text{-compact subset of } L^\infty:
  \inf_n \mu_n(K_\epsilon)>1-\epsilon. 
\]

Now we apply  Prokhorov's theorem in the version given by Jakubowski 
(see Theorem 3 in \cite{Ja}), which allows
 to work in non metric spaces. This requires that the space
 $L^\infty$ with the bounded weak$\star$ topology 
$\mathcal T_{bw\star}$ is countably separated, that is
there exists a countable family $\{g_i:L^\infty\to [-1,1]\}_{i \in
  \mathbb N}$
of $\mathcal T_{bw\star}$-continuous functions which separate points of $L^\infty$.
This is our case, since $L^1$ is separable, so there exists a
countable sequence $\{h_i\}_i \subset L^1$ separating the points of
$L^\infty$, that is for any two elements $x\neq y $ in $L^\infty$
there exists $h_i$ such that 
$\langle x,h_i\rangle \neq \langle y,h_i\rangle$.
Since the mapping $x\mapsto \langle x,h_i\rangle$ is 
$\mathcal T_{w\star}$-continuous, then it is also 
$\mathcal T_{bw\star}$-continuous.

Therefore there exists a subsequence $\{\mu_{n_k}\}_k$ and a probability
measure $\mu$  on $\mathcal B(\mathcal T_{bw\star})$ such that $\mu_{n_k}$
converges narrowly to $\mu$ as $k \to \infty$ ($n_k \to \infty$), that is
\[
  \int\phi\ d\mu_{n_k} \to \int\phi\ d\mu
  \qquad\qquad \forall \phi \in C_b(L^\infty,\mathcal T_{bw\star}).
\]

On the other hand we have that
\[
  \langle P_t\phi,\mu_{n_k}\rangle = \langle \phi,\mu_{n_k}\rangle
  +\frac 1{n_k}\int_{n_k}^{t+n_k}\langle \phi, m_u \rangle du
  -\frac 1{n_k}\int_{0}^{t}\langle \phi,m_u \rangle du.
\]
Letting $k \to \infty$, the two latter terms vanish.
From  \eqref{formula-swstar} we know that $P_t\phi\in C_b(L^\infty,\mathcal T_{bw\star})$
if $\phi\in C_b(L^\infty,\mathcal T_{bw\star})$. Hence in the limit we obtain
\[
  \langle P_t\phi,\mu\rangle= \langle \phi,\mu\rangle 
  \]
for each $\phi\in C_b(L^\infty,\mathcal T_{bw\star})$ and each $t\ge 0$.
\end{proof}

\begin{remark}
Maslowski and Seidler in \cite{MS} 
proved existence of an invariant measure dealing with weak topologies; applications can be found in 
that work and also in some papers by 
Brze\`zniak and collaborators, see \cite{BOS,BMO,BFe}; in all these works the state space is a separable Hilbert space.
Working with the weak topologies  is an improvement in applications, since it is easier to prove the
tightness with respect to weak topologies than  with respect to the
strong ones.
For  instance we prove the weak tightness for the damped Euler
equation \eqref{eq-vor} whereas the tightness with respect to the
strong topology requires a dissipative term of the form $-\Delta \xi$
(or a fractional power of the Laplacian operator), that is it
holds for
the Navier-Stokes equations or fractional Navier-Stokes equations but
not for the Euler equations.

The classical Krylov-Bogoliubov's method 
 is based on the tightness and the Feller property (see, e.g.,  \cite{DPZ,DPZ2}).
Therefore Maslowski and Seidler realized that dealing with weak
topologies for the tightness called for a ''weak'' Feller property too.
Actually, working in a separable Hilbert space
$H$ they considered the  weak 
topology $\mathcal T_w$ and the strong topology $\mathcal T_n$, and 
proved the existence of an invariant measure
by assuming
\begin{enumerate}
  \item
$P_t:SC_b(H,\mathcal T_{w})\to SC_b(H,\mathcal T_{w})$
\item
  the family $\{\mu_n\}_{n \in \mathbb N}$   is $\mathcal T_{w}$-tight
  \end{enumerate}

Let us point out that 
taking into account the bounded weak topology $\mathcal T_{bw}$
(which they considered in a subsequent paper \cite{MS01}), one can
write the two assumptions in an equivalent way as
\begin{enumerate}
  \item
$P_t:C_b(H,\mathcal T_{bw})\to C_b(H,\mathcal T_{bw})$
\item
  the family $\{\mu_n\}_{n \in \mathbb N}$   is $\mathcal T_{bw}$-tight
  \end{enumerate}
since
$SC(H,\mathcal T_{w})=C(H,\mathcal T_{bw})$ and 
$\mathcal T_{w}$-compact subsets coincide with the 
$\mathcal T_{bw}$-compact subsets of $H$.
This simplifies a bit  the proof of Theorem 3.1 in \cite{MS}, looking
more similar to that of the classical   Krylov-Bogoliubov's
theorem.
So in principle the weak topology $\mathcal T_{w}$ does not appear in
the assumptions. However, one proves 
$\mathcal T_{bw}$-continuity by means of sequential 
$\mathcal T_{w}$-continuity, which is easier to prove.
\end{remark}

\subsection{Boundedness in probability}
Here we prove the uniform bound in probability needed in the last proof.

\begin{proposition}\label{boundedness}
Let $\gamma>0$ and assume  \eqref{noise} with $ h>3$.
\\
Then, there exists a real random variable $r$ ($\mathbf P$-a.s.
 finite) such that
\begin{equation}\label{bound-probability}
\sup_{t_0\le 0} \left|\xi(0;\xi(t_0)=0)\right|_{\infty}\le r
\qquad \mathbf P-a.s.
\end{equation}
\end{proposition}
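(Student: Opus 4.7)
The plan is to reduce the problem to a pathwise PDE by splitting off the noise. I would introduce the stationary Ornstein--Uhlenbeck process
\[
z(t)=\int_{-\infty}^{t}e^{-\gamma(t-s)}\,dW^{curl}(s),\qquad t\in\mathbb{R},
\]
solving $dz+\gamma z\,dt=dW^{curl}$ pathwise. Under \eqref{noise} with $h>3$ the process $t\mapsto z(t)$ takes values in $V^{2,\infty}$ and is stationary in time, so $|z(t)|_\infty$ and $|\nabla z(t)|_\infty$ are stationary real processes with all moments finite. Setting $\eta=\xi-z$, the initial condition $\xi(t_0)=0$ becomes $\eta(t_0)=-z(t_0)$, and $\eta$ solves the (random) deterministic equation
\[
\partial_t\eta+u\cdot\nabla\eta+\gamma\eta=-u\cdot\nabla z,\qquad u=K*(\eta+z).
\]
Since $|\xi(0)|_\infty\le|\eta(0)|_\infty+|z(0)|_\infty$ and $|z(0)|_\infty$ is a.s.\ finite, everything reduces to bounding $|\eta(0)|_\infty$ uniformly in $t_0\le 0$.

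The next step is a transport estimate. Because $u$ is divergence free, along each Lagrangian trajectory of $u$ the quantity $\eta$ satisfies a scalar linear ODE with damping $+\gamma$ and source $-u\cdot\nabla z$, so
\[
|\eta(t)|_\infty\le e^{-\gamma(t-t_0)}|z(t_0)|_\infty+\int_{t_0}^{t}e^{-\gamma(t-s)}|u(s)|_\infty\,|\nabla z(s)|_\infty\,ds
\]
for every $t\in[t_0,0]$. On the periodic box the Biot--Savart kernel belongs to $L^1$, giving $|u|_\infty\le C|\xi|_\infty\le C(|\eta|_\infty+|z|_\infty)$. Substituting, introducing $F(t)=e^{\gamma t}|\eta(t)|_\infty$, and applying Gronwall's lemma leads at $t=0$ to
\[
|\eta(0)|_\infty\le\Bigl[e^{\gamma t_0}|z(t_0)|_\infty+C\!\int_{t_0}^{0}\!e^{\gamma s}|z(s)|_\infty|\nabla z(s)|_\infty\,ds\Bigr]\exp\!\Bigl(C\!\int_{t_0}^{0}|\nabla z(s)|_\infty\,ds\Bigr).
\]

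Finally I would dominate the right-hand side uniformly in $t_0\le 0$ by an a.s.\ finite random variable. The prefactor $e^{\gamma t_0}|z(t_0)|_\infty$ is bounded for $t_0\in[-1,0]$ by continuity and tends to $0$ as $t_0\to-\infty$ because $|z(t_0)|_\infty$ only has polylogarithmic growth by Gaussianity and stationarity of $z$; the integral $\int_{-\infty}^{0}e^{\gamma s}|z(s)|_\infty|\nabla z(s)|_\infty\,ds$ has finite expectation by stationarity, hence is a.s.\ finite. The main obstacle---and the step I expect to require the most care---is the Gronwall exponential $\exp(C\int_{t_0}^{0}|\nabla z(s)|_\infty\,ds)$: since $|\nabla z|_\infty$ is stationary with positive mean, this integral grows linearly in $|t_0|$ by the ergodic theorem and the naive bound diverges. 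To handle this I would sharpen the Biot--Savart estimate to $|u|_\infty\le C_p|\xi|_p$ for a fixed $p>2$ (valid because the kernel lies in $L^q$ for every $q<2$), and combine it with an Itô energy estimate that bounds $\mathbb{E}|\xi(s;0)|_p^p$ uniformly in $s$ and $t_0$; the coefficient multiplying $|\eta|_\infty$ in the Gronwall inequality is then replaced by $|\nabla z|_\infty|\xi|_p$ (treated via a Chebyshev/Borel--Cantelli argument), which removes the exponential blow-up and lets the damping $e^{-\gamma(t-s)}$ close the estimate uniformly in $t_0\le 0$. This is precisely the place where the hypothesis $\gamma>0$ is used essentially, both to build the stationary $z$ and to provide the exponential damping that tames the random integrals.
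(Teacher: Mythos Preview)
Your overall strategy---splitting off a stationary Ornstein--Uhlenbeck process and performing a pathwise transport estimate on the difference $\eta=\xi-z$---matches the paper's approach, and you correctly identify the crux of the matter: the Gronwall factor $\exp\bigl(C\int_{t_0}^{0}|\nabla z(s)|_\infty\,ds\bigr)$ grows like $e^{c|t_0|}$ by ergodicity and kills the uniformity in $t_0$.

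Your proposed remedy, however, does not close. Replacing $|u|_\infty\le C|\xi|_\infty$ by $|u|_\infty\le C_p|\xi|_p$ does not remove the feedback, since $|\xi|_p\le|\eta|_p+|z|_p$ and $|\eta|_p$ reappears on the right with the very same coefficient $|\nabla z|_\infty$; the $L^p$ estimate has the identical Gronwall obstruction as the $L^\infty$ one. An It\^o energy estimate on $|\xi|_p^p$ yields only $\sup_{s\ge t_0}\mathbf E|\xi(s;\xi(t_0)=0)|_p^p\le C$ uniformly in $t_0$, whereas you need a \emph{pathwise} bound valid simultaneously for all $t_0\le0$ and all $s\in[t_0,0]$. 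Each $t_0$ gives a different process, so a Chebyshev/Borel--Cantelli argument would have to control a supremum over an uncountable family of distinct random trajectories; you have not indicated how to do this, and it is far from routine.

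The paper sidesteps the whole difficulty by a device you miss: rather than fixing the OU parameter equal to $\gamma$, it introduces $\zeta_\lambda$ solving $d\zeta_\lambda+\lambda\zeta_\lambda\,dt=dW^{curl}$ with a \emph{free} parameter $\lambda>0$. The Gronwall exponent becomes $\gamma-\tilde C\|\zeta_\lambda\|_{H^a}$, and since $\mathbf E\|\zeta_\lambda(0)\|_{H^a}^2=(2\lambda)^{-1}\mathbf E\|W^{curl}(1)\|_{H^a}^2$, one may choose $\lambda$ so large that $\tilde C\,\mathbf E\|\zeta_\lambda(0)\|_{H^a}<\gamma/2$. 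Ergodicity then gives $\tilde C\int_{t_0}^{0}\|\zeta_\lambda(s)\|_{H^a}\,ds\le\tfrac{\gamma}{2}|t_0|$ for all $t_0$ below a random threshold, so the Gronwall factor actually \emph{decays} exponentially as $t_0\to-\infty$ and the uniform bound follows directly. This one extra degree of freedom is the missing idea in your argument.
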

\begin{proof}
 Our proof will follow a similar result introduced by Flandoli in \cite{Fla94} which uses dissipative features of the Navier-Stokes equations and  the ergodic properties of an auxiliary Ornstein-Uhlenbeck process. 

We introduce the linear equation
\begin{equation}\label{OU-curl}
          d\zeta_\lambda(t)+\lambda \zeta_\lambda(t)dt= dW^{curl}(t)
\end{equation}
for $\lambda>0$; its  stationary solution is
\begin{equation}\label{curlZ}
\zeta_\lambda (t)=\int_{-\infty}^{t}e^{-\lambda(t-s)}dW^{curl}(s).
\end{equation} 
Set $\eta_\lambda=\xi-\zeta_\lambda$. 
Then $\eta_\lambda$ fulfils the following equation
\begin{equation}\label{rot2-nu-}
 \frac{\partial\eta_\lambda}{\partial t}+\gamma \eta_\lambda
 +[K\star(\eta_\lambda+\zeta_\lambda)] \cdot\nabla\eta_\lambda= 
 -[K\star(\eta_\lambda+\zeta_\lambda)] \cdot\nabla \zeta_\lambda +(\lambda-\gamma)\zeta_\lambda. 
\end{equation}
We multiply equation \eqref{rot2-nu-} by 
$|\eta_\lambda|^{p-2}\eta_\lambda, \quad p\geq 2$, and integrate over the spatial domain $D$; using that
$\langle u\cdot \nabla\eta_\lambda, |\eta_\lambda| ^{p-2}\eta_\lambda
\rangle=0$ (here the estimates are first performed on more regular
solutions, the Navier-Stokes approximations, and then pass to the
limit for vanishing  viscosity), we infer that
\[
\begin{split}
\frac{1}{p}\frac{d}{dt} & \left| \eta_\lambda(t)\right|_{p}^{p} + \gamma \left| \eta_\lambda(t)\right|_{p}^{p}
=- \langle [K\star(\eta_\lambda(t)+\zeta_\lambda(t))]\cdot \nabla\eta_\lambda(t), |\eta_\lambda(t)| ^{p-2}\eta_\lambda(t) \rangle\\ 
& \quad - \langle [K\star(\eta_\lambda(t)+\zeta_\lambda(t))]\cdot  \nabla \zeta_\lambda(t),  |\eta_\lambda(t)| ^{p-2}\eta_\lambda(t) \rangle
 + (\lambda-\gamma)\langle\zeta_\lambda(t),  |\eta_\lambda(t)| ^{p-2}\eta_\lambda(t)\rangle\\
& \le   |K\star(\eta_\lambda(t)+\zeta_\lambda(t))|_{p}  |\nabla \zeta_\lambda(t)|_{\infty} | \eta_\lambda(t)|_{p}^{p-1}
+ |\lambda-\gamma||\zeta_\lambda(t)|_p |\eta_\lambda(t)|_p^{p-1}\\
& \le \Big[ C( | \eta_\lambda(t)|_{p}+|\zeta_\lambda(t)|_{p} ) |\nabla \zeta_\lambda(t)|_{\infty}+ |\lambda-\gamma||\zeta_\lambda(t)|_p\Big] | \eta_\lambda(t)|_{p}^{p-1}.
   \end{split}
\]

On the other side, we have that 
$\frac{d}{dt} \left| \eta_\lambda(t) \right|_{p} ^{p}=p\left| \eta_\lambda(t) \right|_{p}^{p-1}
 \frac{d}{dt} \left| \eta_\lambda(t) \right|_{p}$; we deduce that for any arbitrary 
 $p\ge 1$
\begin{align*}
\frac{d}{dt} \left| \eta_\lambda(t)\right|_{p}
+ \gamma\left| \eta_\lambda(t)\right|_{p}
 \le  C( | \eta_\lambda(t)|_{p}+|\zeta_\lambda(t)|_{p} ) |\nabla
 \zeta_\lambda(t)|_{\infty}+ |\lambda-\gamma| |\zeta_\lambda(t)|_p .
 \end{align*} 
Hence
\begin{align*}
\frac{d}{dt} \left| \eta_\lambda(t)\right|_{p}
+ \big(\gamma - C|\nabla \zeta_\lambda(t)|_{\infty} \big) \left| \eta_\lambda(t)\right|_{p}
\le 
 \big(C |\nabla \zeta_\lambda(t)|_{\infty}+ |\lambda-\gamma|\big) |\zeta_\lambda(t)|_p .
 \end{align*} 
Now Gronwall's inequality yields on the interval $[t_0,0]$
\begin{multline}\label{p}
 |\eta_\lambda(0)|_p  
 \le 
 |\eta_\lambda(t_0)|_p e^{-\int_{t_0}^{0} (\gamma - C|\nabla \zeta_\lambda(s)|_{\infty} )ds}
 \\+ \int_{t_0}^0  \big(C |\nabla \zeta_\lambda(s)|_{\infty}+ |\lambda-\gamma|\big) |\zeta_\lambda(s)|_p\ 
 e^{-\int_{s}^{0} (\gamma -  C|\nabla \zeta_\lambda(r)|_{\infty} )dr} ds
 \end{multline}
Using that $H^{a-1}\subset L^\infty$ for any $a>2$ and taking $p\to\infty$, we get that
\begin{multline*}
 |\eta_\lambda(0)|_\infty 
 \le 
 |\eta_\lambda(t_0)|_\infty e^{-\int_{t_0}^{0} (\gamma - \tilde C\| \zeta_\lambda(s)\|_{H^{a}} )ds}
 \\+ \int_{t_0}^0  C\big(\|\zeta_\lambda(s)\|_{H^a}+ |\lambda-\gamma|\big) \|\zeta_\lambda(s)\|_{H^a} 
 e^{-\int_{s}^{0} (\gamma - \tilde C \|\zeta_\lambda(r)\|_{H^a} )dr} ds
 \end{multline*}
for some positive constants $C$ and $\tilde C$.
Since $\xi(t_0)=0$, we have
\begin{multline}   \label{infty}
 |\eta_\lambda(0)|_\infty 
 \le 
C \|\zeta_\lambda(t_0)\|_{H^a} e^{-\int_{t_0}^{0} (\gamma - \tilde C\| \zeta_\lambda(s)\|_{H^{a}} )ds}
 \\+ \int_{t_0}^0  C\big(\|\zeta_\lambda(s)\|_{H^a}+ |\lambda-\gamma|\big) \|\zeta_\lambda(s)\|_{H^a} 
 e^{-\int_{s}^{0} (\gamma - \tilde C \|\zeta_\lambda(r)\|_{H^a} )dr} ds
 \end{multline}
 
Now we choose $\lambda$ large enough in order to have a uniform bound. First of all we require that 
$\int_{t_0}^{0} (\gamma - \tilde C\| \zeta_\lambda(s)\|_{H^{a}} )ds>0$. 
To this end, we notice that the process $\zeta_\lambda$ has the same regularity 
as $W^{curl}$ and using that 
$\mathbf E\left[ \int_{-\infty}^t e^{-\lambda(t-s)} d\beta_i(s) \int_{-\infty}^t e^{-\lambda(t-r)} d\beta_j(r) \right]=
\delta_{ij}\frac 1{2\lambda}$,
we compute
\[
\mathbf E \left[\|\zeta_\lambda(t)\|^2_{H^a}\right]
=\frac 1{2\lambda} \mathbf E\left[ \|W^{curl}(1)\|^2_{H^a}\right].
\]
Since $\zeta_\lambda$ is an ergodic process (see, e.g., \cite{DPZ2}) we have
\begin{equation*}
\lim_{t_0\to -\infty} \frac{1}{-t_0}
\int_{t_0}^{0} \|\zeta_\lambda(s)\|_{H^a} ds=\mathbf E \|\zeta_\lambda(0)\|_{H^a} \qquad \mathbf P-a.s.
\end{equation*}
We choose $\lambda$ large enough such that 
\begin{equation}\label{Gamma1}
\tilde C \mathbf E \|\zeta_\lambda(0)\|_{H^a}\le
\frac {\tilde C}{\sqrt{2\lambda}} \sqrt{\mathbf E \|W^{curl}(1)\|^2_{H^a}}
<\frac \gamma 2
\end{equation}
where $\tilde C$ is the constant appearing in \eqref{infty}; thus
\[
\lim_{t_0\to -\infty} \frac{1}{-t_0}\int_{t_0}^{0} \tilde C \|\zeta_\lambda(s)\|_{H^a} ds<\frac \gamma 2
 \qquad \mathbf P-a.s.
\]
Then, given $\omega\in\Omega$ there exists $\tau(\omega)<0$ such that 
\begin{equation}
\int_{t_0}^{0} \tilde C \|\zeta_\lambda(s)\|_{H^a} ds\le \frac{\gamma}{2} (-t_0),\quad \forall t_0<\tau(\omega).
\end{equation}
Moreover, by the continuity of the trajectories of $\zeta_\lambda$,
there exists a (random) constant $r_1$, $\mathbf P$-a.s. finite,
such that
\begin{equation*}
\displaystyle \sup_{\tau(\omega)<t_0\le 0}\int_{t_0}^{0} \tilde C \|\zeta_\lambda(s)\|_{H^a} ds\le r_1
\end{equation*}
$\mathbf P$-a.s..
Hence
\[
e^{-\int_{t_0}^{0} (\gamma - \tilde C\| \zeta_\lambda(s)\|_{H^{a}} )ds}
\]
is (pathwise) uniformly bounded for $t_0<0$ and vanishes exponentially fast as $t_0\to -\infty$.

Now, arguing as before we get that there exists a 
a random variable $r_2$ ($\mathbf P$-a.s. finite) such that 
$\mathbf P$-a.s. we have
\begin{equation}\label{z2}
\|\zeta_\lambda(t)\|_{H^a} \le r_2 (|t|+1)  \quad  t<0.
\end{equation}

Thus we have proved 
a uniform bound for each term  in the r.h.s. of estimate \eqref{infty}, that is we have obtained that
 there exists a random variable $r_3$ ($\mathbf P$-a.s. finite) such that
 \[
\sup_{t_0\le 0} |\eta_\lambda(0;\eta(t_0)=-\zeta_\lambda(t_0))|_\infty \le r_3 \qquad \mathbf P-a.s.
 \]
 Since $\xi=\eta_\lambda+\zeta_\lambda$ , we obtain \eqref{bound-probability}.
\end{proof}

From this we get
\begin{corollary}\label{c-b}
Let $\gamma>0$ and assume  \eqref{noise} with $ h>3$.
\\
Then, for any $\epsilon>0$ there exists $R_\epsilon>0$ such that
\[
\inf_{t\ge 0}\mathbf P\{ \left|\xi(t;\xi(0)=0)\right|_{\infty} \le R_\epsilon\}\ge 1-\epsilon .
\]
\end{corollary}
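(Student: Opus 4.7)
The plan is to reduce the corollary to Proposition \ref{boundedness} via a time-stationarity argument on the two-sided Wiener process. Recall the noise $W$ is defined for all $t \in \mathbb{R}$ through the extension $\{\beta_i\}_{i \in \mathbb{N}}$, so its increment process is stationary under time shifts.

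\textbf{Step 1: Translate forward evolution to backward evolution via stationarity.} For any $t \ge 0$, the random variable $\xi(t;\xi(0)=0)$ depends measurably on the noise increments $\{W(s)-W(0)\}_{0 \le s \le t}$, while $\xi(0;\xi(-t)=0)$ depends on $\{W(s)-W(-t)\}_{-t \le s \le 0}$. These two increment processes have the same distribution by stationarity of the Wiener process. Since the solution map is a measurable functional of the noise path (which follows from Theorem \ref{esiste-unico}.iii and the construction recalled in Remark 2.c), we conclude that
\[
 \mathcal L\bigl(|\xi(t;\xi(0)=0)|_\infty\bigr) = \mathcal L\bigl(|\xi(0;\xi(-t)=0)|_\infty\bigr)
\]
for every $t \ge 0$.

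\textbf{Step 2: Apply the pathwise bound.} By Proposition \ref{boundedness}, there exists a $\mathbf P$-a.s.\ finite random variable $r$ such that, $\mathbf P$-a.s.,
\[
 |\xi(0;\xi(t_0)=0)|_\infty \le r \qquad \text{for all } t_0 \le 0.
\]
In particular, taking $t_0 = -t$ gives $|\xi(0;\xi(-t)=0)|_\infty \le r$ almost surely, uniformly in $t \ge 0$.

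\textbf{Step 3: Choose $R_\epsilon$ using a.s.\ finiteness of $r$.} Given $\epsilon > 0$, since $r < \infty$ $\mathbf P$-a.s., we may pick $R_\epsilon > 0$ such that $\mathbf P\{r \le R_\epsilon\} \ge 1 - \epsilon$. Combining with Step 2,
\[
 \mathbf P\bigl\{|\xi(0;\xi(-t)=0)|_\infty \le R_\epsilon\bigr\} \ge \mathbf P\{r \le R_\epsilon\} \ge 1 - \epsilon
\]
for every $t \ge 0$. By the equality of laws from Step 1, the same lower bound holds for $\mathbf P\{|\xi(t;\xi(0)=0)|_\infty \le R_\epsilon\}$, uniformly in $t \ge 0$, which is the claim.

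The only subtle point is Step 1: one must justify that passing from the two-sided noise on $[t_0,0]$ to the forward noise on $[0,-t_0]$ yields identically distributed solutions. This relies on the pathwise uniqueness given by Theorem \ref{esiste-unico}.iii, which ensures the solution is a deterministic measurable functional of the noise increments. Everything else is a routine application of Proposition \ref{boundedness} combined with the definition of ``a.s.\ finite''.
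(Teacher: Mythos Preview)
Your proof is correct and follows essentially the same approach as the paper: the paper's proof also invokes the equality of laws (called ``homogeneity'') between $\xi(t;\xi(0)=0)$ and $\xi(0;\xi(-t)=0)$, then applies Proposition~\ref{boundedness} and chooses $R_\epsilon$ from the a.s.\ finiteness of $r$. Your Step~1 is in fact more explicit than the paper's one-line appeal to homogeneity, spelling out why the solution is a measurable functional of the noise increments.
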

\begin{proof}
First, let us note that for any $t_0<0$ the random variables 
$\xi(0;\xi(t_0)=0)$ and $\xi(-t_0;\xi(0)=0)$
have the same law (homogeneity). 
Moreover, given a random varible $r$ which is non negative and finite, we have that 
for any $\epsilon>0$ there exists $R_\epsilon>0$ such that 
\[
\mathbf P\{ r\le R_\epsilon\}\ge 1-\epsilon.
\]
Therefore, keeping in mind the result of Proposition \ref{boundedness} we get
\[
\mathbf P\{ \left|\xi(t;\xi(0)=0)\right|_{\infty} \le R_\epsilon\}
=
\mathbf P\{ \left|\xi(0;\xi(-t)=0)\right|_{\infty} \le R_\epsilon\}
\ge \mathbf P\{ r \le R_\epsilon\}\ge 1-\epsilon
\]
and this estimate is uniform in time.
\end{proof}

\medskip
{\bf Acknowledgements.}
H. Bessaih was partially supported by Simons Foundation grant 582264
and by INdAM-GNAMPA to visit the Department of Pavia. 
\\
B. Ferrario was  partially supported by INdAM-GNAMPA,  
by MIUR-Di\-par\-ti\-menti di Eccellenza Program (2018-2022) and by
PRIN 2015 ''Deterministic and stochastic evolution equations''.


\begin{thebibliography}{11}

\bibitem{Bardos}
C. Bardos:
Existence et unicit\'{e} de la solution de l'\'{e}quation d'Euler en dimensions deux, 
{\it Jour. Math. Anal. Appl.} {\bf 40} (1972),  769--780. 

\bibitem{Bessaih2000}
H. Bessaih:
Stochastic weak attractor for a dissipative Euler equation, 
{\it Electron. J. Probab.} {\bf 5} (2000), no. 3, 16 pp. 

\bibitem{Bessaih2008}
H. Bessaih:
Stationary solutions for the 2D stochastic dissipative Euler equation, 
{\it Seminar on Stochastic Analysis, Random Fields and Applications V}, 23--36, 
Progr. Probab. 59, Birkh\"auser, Basel, 2008. 

\bibitem{Bes15}
H. Bessaih: 
Stochastic incompressible Euler equations in a two-dimensional domain,
{\it Stochastic analysis: a series of lectures},  135--155, 
Progr. Probab., 68, Birkh\"user/Springer, Basel, 2015.

\bibitem{BesFer13}
H. Bessaih, B. Ferrario:
Inviscid limit of stochastic damped 2D Navier-Stokes equations, 
{\it Nonlinearity} {\bf 27} (2014), no.1, 1--15.

\bibitem{BessaihF1999}
H. Bessaih, F. Flandoli:
2-D Euler equation perturbed by noise, 
{\it NoDEA Nonlinear Differential Equations Appl.} {\bf 6} (1999), no. 1, 35--54.

\bibitem{BF}
H. Bessaih, F. Flandoli:
Weak attractor for a dissipative Euler equation, 
{\it J. Dynam. Differential Equations} {\bf 12} (2000), no. 4, 713--732.

\bibitem{boffetta}
G. Boffetta, R. Ecke: 
Two-dimensional turbulence, 
{\it Annu. Rev. Fluid Mech.} {\bf 44} (2012), 427--451.
 
\bibitem{brezis}
H. Br\'ezis: 
{\it Functional analysis, Sobolev spaces and partial differential equations}.
Universitext. Springer, New York, 2011. 

\bibitem{BFe}
Z. Brze\'zniak, B. Ferrario:
Stationary solutions for stochastic damped Navier-Stokes equations in $R^d$,
 {\it Indiana Univ. Math. J.} {\bf 68} (2019), No. 1, 105--138.

\bibitem{BMO}
Z. Brze\'zniak, E. Motyl, M. Ondrej\`at:
Invariant measure for the stochastic Navier-Stokes equations in unbounded 2D domains,
{\it Annals Probab.} {\bf 45}  (2017),  No. 5, 3145--3201.

\bibitem{BOS}
Z. Brze\'zniak, M. Ondrej\`at, J. Seidler:
Invariant measures for stochastic nonlinear beam and wave equations,
{\it J. Diff. Equations} {\bf 260} (2016), No. 5, 4157--4179.

\bibitem {BP01}
Z. Brze\'zniak, S. Peszat: 
Stochastic two dimensional Euler equations, 
{\it Annal.  Probab.}, {\bf 29} (2001), no. 4, 1796--1832.

\bibitem{CGHV}
P. Constantin, N. E. Glatt-Holtz, V. C. Vicol: 
Unique ergodicity for fractionally dissipated, stochastically forced
2D Euler equations,
{\it Comm. Math. Phys.} {\bf 330} (2014), no. 2, 819--857.

\bibitem{DPZ}
G. Da Prato, J. Zabczyk: 
{\it Stochastic Equations in Infinite Dimensions}.
Encyclopedia of Mathematics and its Applications 44, Cambridge University
Press, 1992.

\bibitem{DPZ2}
G. Da Prato, J. Zabczyk: 
{\it Ergodicity for Infinite Dimensional Systems}.
London Mathematical Socienty Lecture Note Series 229, Cambridge
University Press, 1996.

\bibitem{ferrari}
A. B. Ferrari:
On the blow-up of solutions of the 3-D Euler equations in a bounded
domain,
{\it Comm. Math. Phys.} {\bf 155} (1993), no. 2, 277--294. 

\bibitem{Fla94}
F. Flandoli: 
Dissipativity and invariant measures for stochastic Navier-Stokes
equations,
{\it NoDEA Nonlinear Differential Equations Appl.} 1 (1994), no. 4, 403-423

\bibitem{gal}
G. Gallavotti:
{\it Foundations of fluid dynamics}.
 Springer-Verlag, Berlin, 2002.

\bibitem{HM}
  M. Hairer, J.C. Mattingly:
  Ergodicity of the 2D Navier-Stokes equations with degenerate stochastic forcing,
  {\it Ann. of Math.}  {\bf 164} (2006), no. 3, 993--1032. 

\bibitem{Ja}
A. Jakubowski:
The almost sure Skorokhod representation for subsequences in nonmetric
spaces,
{\it Teor. Veroyatnost. i Primenen.} {\bf 42}  (1997), no.~1, 209-216;
translation in {\it Theory  Probab. Appl.} {\bf 42}  (1998), no.~1, 167-174

\bibitem{kato}
T. Kato:
 Remarks on the Euler and Navier-Stokes equations in $R^2$,
{\it Nonlinear functional analysis and its applications}, 
Part 2 (Berkeley, Calif., 1983), 17, Proc. Sympos. Pure Math., 45, 
Part 2, Amer. Math. Soc., Providence, RI, 1986.

\bibitem{Ku}
A. Kupiainen:
Ergodicity of two dimensional turbulence (after Hairer and Mattingly).
S\'eminaire Bourbaki. Vol. 2009/2010. Expos\'es 1012-1026. 
{\it Ast\'erisque} {\bf 339} (2011), Exp. No. 1016, vii, 137--156.

\bibitem{MS}
B. Maslowski, J. Seidler:
On sequentially weakly Feller solutions to SPDE's.
{\it Atti Accad. Naz. Lincei} Cl. Sci. Fis. Mat. Natur. Rend. Lincei (9)
Mat. Appl. 10 (1999), no. 2, 69-78

\bibitem{MS01}
B. Maslowski, J. Seidler:
Strong Feller solutions to SPDE's are strong Feller in the weak
topology. 
{\it Studia Math.} 148 (2001), no. 2, 111-129. 

\bibitem{Meg}
R. E. Megginson: {\it An introduction to Banach space theory}, 
 Graduate Texts in Mathematics, 183. Springer-Verlag, New York, 1998.

\bibitem{Tal}
M. Talagrand:
Comparaison des Bor\'eliens pour les topologies fortes et faibles.
{\it  Indiana Univ. Math. J.} {\bf 21} (1978), 1001--1004.

\bibitem{Temam}
R. Temam:  
{\it Navier-Stokes equations. Theory and numerical analysis.} 
Revised edition, 
Studies in Mathematics and its Applications, 2. North-Holland
Publishing Co., Amsterdam-New York  (1979).

\bibitem{yudovich}
V. I. Yudovic: 
Non-stationary flows of an ideal incompressible fluid.  (Russian) 
{\it  \v{Z}.  Vy\v{c}isl. Mat. i Mat. Fiz. }
{\bf 3} (1963), 1032--1066. Translation in English in 
{\it  U.S.S.R. Comput. Math. and Math. Phys. }
{\bf 3} (1963), no. 6, 1407--1456.

\bibitem{yudovich1995}
V. I. Yudovich: 
Uniqueness theorem for the basic nonstationary problem in the dynamics
of an ideal incompressible fluid. 
{\it Math. Res. Lett.} {\bf 2} (1995), 27--38. 


\end{thebibliography}
\end{document}